\documentclass{amsart}
\usepackage{graphicx} % Required for inserting images
\usepackage[usenames,dvipsnames]{pstricks}
\usepackage{pstricks-add}
\usepackage{epsfig}
\usepackage{pst-grad} % For gradients
\usepackage{pst-plot} % For axes
\usepackage[space]{grffile} % For spaces in paths
\usepackage{etoolbox} % For spaces in paths
\makeatletter % For spaces in paths
\patchcmd\Gread@eps{\@inputcheck#1 }{\@inputcheck"#1"\relax}{}{}
\makeatother
\title{Multiprojective Geometry of Compatible Triples of Fundamental and Essential Matrices}
\author{Timothy Duff, Viktor Korotynskiy, Anton Leykin, Tomas Pajdla}
\usepackage{amsmath,amsfonts,amssymb,amsthm,booktabs,hyperref,tikz,cleveref}
\usetikzlibrary{calc}
\usepackage{enumitem}

\DeclareMathOperator{\GL}{GL}
\DeclareMathOperator{\SO}{SO}
\DeclareMathOperator{\PGL}{PGL}
\DeclareMathOperator{\Ext}{Ext}

\DeclareMathOperator{\im}{im}
\DeclareMathOperator{\adj}{adj}
\DeclareMathOperator{\rank}{rank}
\DeclareMathOperator{\codim}{codim}

\DeclareMathOperator{\trace}{tr}
\DeclareMathOperator{\conv}{conv}
\DeclareMathOperator{\mdeg}{mdeg}
\DeclareMathOperator{\tw}{tw}

\newcommand{\CC}{\mathbb{C}}

\newcommand{\PP}{\mathbb{P}}
\newcommand{\ZZ}{\mathbb{Z}}
\newcommand{\calB}{\mathcal{B}}
\newcommand{\calK}{\mathcal{K}}
\newcommand{\calF}{\mathcal{F}}
\newcommand{\calV}{\mathcal{V}}
\newcommand{\calE}{\mathcal{E}}
\newcommand{\calI}{\mathcal{I}}
\newcommand{\calM}{\mathcal{M}}
\newcommand{\calS}{\mathcal{S}}
\newcommand{\simgrp}{\calS_3}
\newcommand{\ee}{\mathbf{e}}

\newcommand{\Kdel}{\Delta}

\newcommand{\regularMap}[5]{\begin{aligned} #1\colon #2 &\rightarrow #3 \\ #4 &\mapsto #5 \end{aligned}}

\theoremstyle{definition}
\newtheorem{definition}{Definition}
\newtheorem{proposition}{Proposition}
\newtheorem{example}{Example}
\newtheorem{theorem}{Theorem}
\newtheorem{remark}{Remark}
\newtheorem{problem}{Problem}

\newcommand{\matrixr}[1]{\begin{bmatrix} #1 \end{bmatrix}}

\begin{document}

\begin{abstract}
We characterize the variety of compatible fundamental matrix triples by computing its multidegree and multihomogeneous vanishing ideal.
This answers the first interesting case of a question recently posed by Br\aa telund and Rydell. 
Our result improves upon previously discovered sets of algebraic constraints in the geometric computer vision literature, which are all incomplete (as they do \emph{not} generate the vanishing ideal) and sometimes make restrictive assumptions about how a matrix triple should be scaled.
Our discussion touches more broadly on generalized compatibility varieties, whose multihomogeneous vanishing ideals are much less well understood.
One of our key new discoveries is a simple set of quartic constraints vanishing on compatible fundamental matrix triples.
These quartics are also significant in the setting of essential matrices: together with some previously known constraints, we show that they locally cut out the variety of compatible essential matrix triples.
\end{abstract} 
\maketitle

\section{Introduction}
Multiview geometry~\cite{DBLP:books/cu/HZ2004, kileel2022snapshot} is a well-studied and important framework for investigating problems of reconstructing a 3D world from 2D images~\cite{schoenberger2016sfm}.  
A central object in this theory is the {\em fundamental matrix} associated to a pair of pinhole cameras. This is a $3\times 3$ matrix that is used to test whether two points in two different images are projections of a single 3D point. 
The \emph{essential matrix} is an important special case, arising for a pair of calibrated cameras.
For $n\ge 3$ cameras, the $\binom{n}{2}$ fundamental matrices will be dependent.
Here, we study these dependencies, extending previously known results and providing complete answers when $n=3.$

Let $\calB_3 \subset \GL_3 (\CC)$ denote the Borel group of $3\times 3$ upper-triangular matrices, and $\PP (\calB_3) \subset \PGL_3 (\CC)$ its projectivization.
To any subvariety $\calK \subset \PP (\calB_3)^{\times 3}$, whose points are triples of elements of $\PP (\calB_3),$ we associate a rational map defined by component-wise matrix products,
\begin{align}
\Psi_\calK : \calK \times \SO_3 (\CC)^{\times 3} \times \left(\CC^{3}\right)^{\times 3} &\dashrightarrow 
\PP (\CC^{3\times 3})^{\times 3}
\label{eq:parametrize-compatible}\\
(K_1, K_2, K_3, R_1, R_2, R_3, c_1, c_2, c_3) &\mapsto 
\left( 
K_i^{-T} R_i [c_j-c_i]_{\times } R_j^T K_j^{-1} \mid 1\le i < j \le 3 
\right). \nonumber 
\end{align}
Above, for $c = (c_x, c_y, c_z) \in \CC^3$, we denote by $[c]_{\times }$ the skew-symmetric matrix
\begin{equation}
    \label{eq:hat-matrix}
    [c]_{\times} = \begin{pmatrix}
        0 & - c_z & c_y \\
        c_z & 0 & -c_x \\
        -c_y & c_x & 0 
    \end{pmatrix}.
\end{equation}

The entities $K_i$, $R_i$, and $c_i$ represent the \emph{calibration matrix}, \emph{rotation}, and \emph{camera center} for the $i$-th camera in an arrangement of three pinhole cameras.

\begin{definition}\label{def:calibrated-triple-variety}
With notation as above, the closed subvariety $Y_\calK = \overline{\im \Psi_\calK} \subset \left(\PP^8 \right)^{\times 3}$ is the \emph{compatible triple variety} for fundamental matrices with $\calK$-priors. 
\end{definition}
Here are three interesting instances of the compatible triple variety:
\begin{enumerate}
    \item For $\calF = \PP (\calB_3) ^{\times 3}$, the variety of \emph{compatible fundamental matrix triples} $Y_\calF.$
    \item For $\calE = \{ (I, I, I) \}$, the variety of \emph{compatible essential matrix triples} $Y_{\calE}$.
    \item Between the previous two extremes, 
    for $\Kdel = \{ (K,K,K) \mid K \in \PP (\calB_3) \}$, the variety of \emph{compatible equi-uncalibrated fundamental matrix triples} $Y_{\Kdel}.$
\end{enumerate}

These three cases, $Y_\calE \subset Y_\Kdel \subset Y_\calF$, do not exhaust the possibilities for $Y_\calK$---see e.g.~\cite{DBLP:conf/cvpr/CinDMP24} for a richer taxonomy of possible calibration priors. 

Before recalling how such varieties arise from 3D reconstruction problems in computer vision, let us state the paper's main results.
Our first result provides a fairly complete picture of the variety $Y_\calF .$

\begin{theorem}\label{thm:uncalibrated-ideal}
The variety $Y_\calF$ has dimension $18$. Its multidegree function is supported on the lattice simplex $$\conv \left\{ 7 \ee_{i} + 7\ee_{j} + 4\ee_{k} \mid \text{distinct }i,j,k\in \{ 1,2,3\}  \right\},$$ with values shown in~\Cref{fig:multidegrees} (left).
The $\ZZ^3$-homogeneous vanishing ideal $\mathcal{I} (Y_\calF)$ has an explicit set of minimal generators in total degree $3$--$7$, as tallied below:
\begin{center}
    \begin{tabular}{|c|c|c|c|c|}
    \hline 
&      (cubics)   & (quartics) & (quintics) & (septics) \\
\hline 
 $\ZZ^3$-degree        & $3\ee_i$ & $2\ee_i + \ee_j + \ee_k$ & $3\ee_i + \ee_j + \ee_k$ & $3\ee_i + 3\ee_j + \ee_k$\\
 $\#$ generators & 3 & 9 & 18 & 108 \\
 equations & \cref{eq:Fij-det} & \cref{eq:quartics} & \cref{eq:quintics} & \cref{eq:septics} \\
 \hline 
    \end{tabular}
\end{center}
\end{theorem}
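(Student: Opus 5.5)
The plan has three parts, taken in order: the dimension, the multidegree, and the vanishing ideal. The last is where the real difficulty lies.

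\emph{Dimension.} I would count parameters in $\Psi_\calF$ and subtract the fibre dimension. A triple of projective cameras $P_i = K_i R_i [I \mid -c_i]$ has $3\cdot 11 = 33$ parameters. Fundamental matrices are unchanged under the action of $\PGL_4$ on world coordinates, which has dimension $15$. For a generic triple the fibre of the map from camera triples to fundamental matrix triples is exactly one $\PGL_4$-orbit; this is the classical fact that three fundamental matrices of non-collinear cameras determine the cameras up to projective ambiguity. Hence $\dim Y_\calF = 33-15 = 18$. To make this rigorous I would check the rank of the Jacobian of $\Psi_\calF$ at a random rational point with exact arithmetic, which bounds the dimension from below; the count above bounds it from above.

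\emph{Multidegree.} $Y_\calF$ has dimension $18$ in $(\PP^8)^{\times 3}$, whose dimension is $24$, so the codimension is $6$ and the multidegree is supported in total degree $6$ on the dual side. Equivalently, the nonzero multidegrees $d_{a,b,c}$ for $a+b+c=18$ count the points of $Y_\calF \cap (L_1\times L_2\times L_3)$, where $L_i$ are generic linear subspaces of codimensions $a,b,c$.

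I would compute these numbers symbolically, for instance with Macaulay2's \texttt{multidegree} applied to a Gr\"obner basis of the ideal from the last step. As an independent check I would count them numerically by monodromy on the parametrization. Every point of $Y_\calF$ lies in the product of the rank-$\le 2$ determinantal hypersurfaces $\{\det F_{ij}=0\}$, and the image in each factor $\PP^8$ is that $7$-dimensional hypersurface. So the exponent in each factor is at most $7$, which gives the upper bounds $7\ee_i$. The lower bound $4$ should come from the dimension of the image under each pair of projections: $\dim \pi_{ij}(Y_\calF) = 14$, since two generic rank-$2$ matrices are independent. So at least $18-14=4$ must be cut in the remaining factor. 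This pins the support to the stated simplex, and the interior values are read off from the computation. Support being a full lattice simplex also follows from Brion's result that multiplicity-free (or in general, irreducible) varieties have M-convex/discrete polymatroid support, once the extreme vertices are verified nonzero.

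\emph{Vanishing ideal.} First I would verify that the listed polynomials vanish on $\im \Psi_\calF$ by substituting the parametrization, or, more cheaply, the camera parametrization. These are the determinants \cref{eq:Fij-det}, the quartics \cref{eq:quartics}, the quintics \cref{eq:quintics}, and the septics \cref{eq:septics}. This gives $J \subseteq \calI(Y_\calF)$.

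For the reverse inclusion I would use the multigraded Hilbert function. Let $J$ be the ideal generated by the list. I would show:
\begin{enumerate}
\item $J$ defines $Y_\calF$ set-theoretically on a dense open set, by checking that the Jacobian at a generic point has rank $6$.
\item $J$ has no embedded or extra components supported on the irrelevant locus or on degenerate loci, such as collinear centers or coincident epipoles.
\end{enumerate}
In practice the cleanest route is to compute the saturation $(J : B^\infty)$ with respect to the irrelevant ideal $B$, together with a prime or elimination check. Concretely, I would compute $\calI(Y_\calF)$ directly by eliminating camera variables, working over a finite field with a rational-reconstruction argument. Then I would compare multigraded Hilbert series, or compare leading-term ideals, with those of $J$. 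Equality of the multidegrees of $J$ and $Y_\calF$, plus primality of $J$ (it is generically reduced and Cohen--Macaulay, if verified), also suffices.

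Minimality of the generators and the tally $3,9,18,108$ would then follow from computing the multigraded Betti numbers $\beta_{0,\mathbf d}$ of the ideal.

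The main obstacle is this elimination and saturation. The parametrization involves many variables, and the septics show that the ideal is genuinely complicated. I would first try exploiting the $\PGL_3^{\times 3}$-equivariance, $F_{ij}\mapsto H_i^{-T}F_{ij}H_j^{-1}$, which lets one normalize epipoles. I would also use the $\simgrp$ symmetry to reduce to representative degrees, and then perform the Gr\"obner computations degree by degree, truncated at degree $(3,3,3)$-ish bounds.
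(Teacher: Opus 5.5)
There is a genuine gap: your plan never establishes $J=\calI(Y_\calF)$.

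Your dimension count is fine. So is your derivation of the support from $\dim\pi_i(Y_\calF)=7$ and $\dim\pi_{ij}(Y_\calF)=14$. The combination you mention in passing, equal multidegrees plus unmixedness, is the right one. Primality is then the conclusion, not an additional hypothesis; ``generically reduced and Cohen--Macaulay'' alone gives reducedness, not irreducibility.

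The problem is that you never determine $\mdeg(Y_\calF)$ independently of $J$. You obtain it by running \texttt{multidegree} on $J$ itself, which is circular, and monodromy appears only as an uncertified check. Without an independent value, nothing excludes a second $21$-dimensional primary component of $J$, a priori supported on some degenerate locus:
\begin{itemize}
\item a rank-$6$ Jacobian at a general point only shows that $Y_\calF$ is a generically reduced component;
\item unmixedness only removes embedded and lower-dimensional components;
\item saturating by the irrelevant ideal only affects associated primes containing a whole block of $9$ variables, hence of codimension $\ge 9$.
\end{itemize}
Your elimination route is sound in principle. Over $\mathbb{F}_p$ you do not even need rational reconstruction: $J$ has integer generators, and semicontinuity of Hilbert functions shows that if the reduction of $J$ modulo $p$ equals the kernel of the camera parametrization over $\mathbb{F}_p$, then $J=\calI(Y_\calF)$. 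But this is exactly the computation you leave open as ``the main obstacle''.

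The missing idea, which is how the paper argues, is to use $J\subseteq\calI(Y_\calF)$ as an upper bound and prove a matching lower bound without reference to $J$. Let $Q_0,\dots,Q_s$ be the $21$-dimensional primary components of $J$, with $\sqrt{Q_0}=\calI(Y_\calF)$ of multiplicity $m$. Then $\mdeg_J(\boldsymbol{\alpha})=m\,\mdeg_{Y_\calF}(\boldsymbol{\alpha})+\sum_{i=1}^{s}\mdeg_{Q_i}(\boldsymbol{\alpha})\ge\mdeg_{Y_\calF}(\boldsymbol{\alpha})$. Moreover $\mdeg_J$ is cheap to compute: it takes the values $9,36,81$ on the simplex and $0$ off it.

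The paper then certifies the lower bounds $\mdeg_{Y_\calF}(\boldsymbol{\alpha})\ge 9,36,81$ by exact computations with zero-dimensional ideals coming from the parametrization; certified monodromy could serve the same purpose. Equality at every $\boldsymbol{\alpha}$ forces $m=1$ and $s=0$, because every codimension-$6$ component has some nonzero multidegree. Finally, the $\Ext$-codimension criterion of Eisenbud--Huneke--Vasconcelos, checked via \texttt{removeLowestDimension}, shows $J$ has no associated primes of dimension $<21$. So $J$ is prime of Krull dimension $21$ and equals $\calI(Y_\calF)$. This yields the ideal and the multidegree values at once, with no elimination.
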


\begin{figure}
   % \centering
\begin{tabular}{lcr}
\begin{tikzpicture}[scale=1.5]
\filldraw[yellow!60] (-1,0) circle (5pt );
\node at (-1,0) {9};
\filldraw[yellow!60] (0,1.224744871391589) circle (5pt);
\node at (0,1.224744871391589)  {9};
\filldraw[yellow!60] (1,0) circle (5pt);
\node at (1,0) {9};

\filldraw[yellow!60] (-1/3,0) circle (5pt);
\node at (-1/3,0) {36};
\filldraw[yellow!60] (1/3,0) circle (5pt);
\node at (1/3,0) {36};

\filldraw[yellow!60] (-1/3,2*1.224744871391589/3) circle (5pt);
\node at (-1/3,2*1.224744871391589/3) {36};
\filldraw[yellow!60] (-2/3,1.224744871391589/3) circle (5pt);
\node at (-2/3,1.224744871391589/3) {36};
\filldraw[yellow!60] (1/3,2*1.224744871391589/3) circle (5pt);
\node at (1/3,2*1.224744871391589/3) {36};
\filldraw[yellow!60] (2/3,1.224744871391589/3) circle (5pt);
\node at (2/3,1.224744871391589/3) {36};

\filldraw[yellow!60] (0,1.224744871391589/3) circle (5pt);
\node at (0,1.224744871391589/3) {81};
\end{tikzpicture}
&
\phantom{fffff}
&
\begin{tikzpicture}[scale=1.5]
\filldraw[green!60] (-1,0) circle (5pt );
\node at (-1,0) {400};
\filldraw[green!60] (0,1.224744871391589) circle (5pt);
\node at (0,1.224744871391589)  {400};
\filldraw[green!60] (1,0) circle (5pt);
\node at (1,0) {400};

\filldraw[green!60] (-1/2,0) circle (5pt);
\node at (-1/2,0) {800};
\filldraw[green!60] (1/2,0) circle (5pt);
\node at (1/2,0) {800};
\filldraw[green!60] (1/4,3*1.224744871391589/4) circle (5pt);
\node at (1/4,3*1.224744871391589/4) {800};
\filldraw[green!60] (3/4,1.224744871391589/4) circle (5pt);
\node at (3/4,1.224744871391589/4) {800};
\filldraw[green!60] (-1/4,3*1.224744871391589/4) circle (5pt);
\node at (-1/4,3*1.224744871391589/4) {800};
\filldraw[green!60] (-3/4,1.224744871391589/4) circle (5pt);
\node at (-3/4,1.224744871391589/4) {800};

\filldraw[green!60] (-1/2,1.224744871391589/2) circle (5pt);
\node at (-1/2,1.224744871391589/2) {960};
\filldraw[green!60] (0,0) circle (5pt);
\node at (0,0) {960};
\filldraw[green!60] (1/2,1.224744871391589/2) circle (5pt);
\node at (1/2,1.224744871391589/2) {960};

\filldraw[green!60] (-1/4,1.224744871391589/4) circle (5pt);
\node at (-1/4,1.224744871391589/4) {\scriptsize 2000};
\filldraw[green!60] (1/4,1.224744871391589/4) circle (5pt);
\node at (1/4,1.224744871391589/4) {\scriptsize 2000};
\filldraw[green!60] (0,1.224744871391589/2) circle (5pt);
\node at (0,1.224744871391589/2) {\scriptsize 2000};

\end{tikzpicture}
\end{tabular}
    \caption{Multidegrees of $Y_\calF$ (left) and $Y_\calE$ (right).
    Values for $Y_{\calF}$ and $\mdeg_{\calE} (5,5,1)=\mdeg_{\calE} (5,1,5)=\mdeg_{\calE} (1,5,5)=400$ are from~\Cref{thm:uncalibrated-ideal} and~\Cref{prop:essential-multidegree}, respectively.
    The remaining values for $Y_\calE $  are obtained by numerical monodromy heuristics.
    }
    \label{fig:multidegrees}
\end{figure}
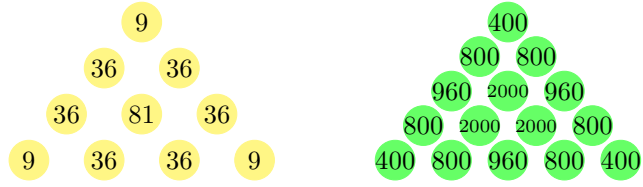

\begin{remark}\label{rem:br-conjecture}
Br\aa telund and Rydell~\cite[\S 5]{DBLP:conf/iccv/BratelundR23} recently studied a generalization of $Y_{\calF}$: given a graph $G=([n],E)$, their \emph{viewing graph variety} is the closed image of 
\begin{align}
\Psi_G : \left( \calB_3\times \SO_3 (\CC) \times \CC^{3}\right)^{\times n} &\dashrightarrow 
\PP (\CC^{3\times 3})^{\times E}
\label{eq:parametrize-viewing-graph}\\
(K_i, R_i, c_i \mid 1\le i \le n) &\mapsto 
\left( 
K_i^{-T} R_i [c_j-c_i]_{\times } R_j^T K_j^{-1} \mid ij \in E 
\right). \nonumber 
\end{align}
On a positive note, their work discovers new equations vanishing on viewing graph varieties for $n\ge 4$. On the negative side, they also prove that these equations together with the cubics and quintics in~\Cref{thm:uncalibrated-ideal} \emph{do not} cut out the viewing graph variety (even set-theoretically) for the complete graph $K_n$ on $n\ge 3$ vertices.
\Cref{thm:uncalibrated-ideal} provides a complete characterization in the first interesting case of $K_3.$
\end{remark}

Our second main result is a considerably weaker statement about the variety $Y_\calE$ of compatible essential matrix triples.
We emphasize that, relative to $Y_\calF$, describing $Y_\calE$ is objectively harder---there is no prior result to compare to. 
Recall that an irreducible subvariety $Y$ of multiprojective space is \emph{locally defined} by  multihomogeneous polynomials $f_1, \ldots f_k$ if the vanishing locus $\calV(f_1, \ldots , f_k)$ contains $Y$ and has dimension $\dim Y.$

\begin{theorem}\label{thm:calibrated-local}
The variety $Y_\calE$ has dimension $11.$ Its multidegree function is supported on the lattice simplex $$\conv \left\{ 5 \ee_{i} + 5\ee_{j} + \ee_{k} \mid \text{distinct } i,j,k\in \{ 1,2,3\} \right\},$$ as illustrated in~\Cref{fig:multidegrees} (right.)
Equations locally defining $Y_\calE$ are as follows:
\begin{center}
    \begin{tabular}{|c|c|c|c|}
    \hline 
&      (cubics)   & (quartics) & (sextics) \\
\hline 
 $\ZZ^3$-degree        & $3\ee_i$ & $2\ee_i + \ee_j + \ee_k$ & $2\ee_i + 2\ee_j + 2\ee_k$\\
 $\#$ generators & 27 & 9 & 1\\
 equations & \cref{eq:Fij-det} & \cref{eq:quartics} & \cref{eq:martyushev-sextic} \\
 \hline 
    \end{tabular}
\end{center}
\end{theorem}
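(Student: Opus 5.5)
The argument splits into the dimension count, the support of the multidegree, and local definition by the listed equations.

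\emph{Dimension.} I would compute $\dim Y_\calE$ from the parametrization $\Psi_\calE$, whose source $\SO_3(\CC)^{\times 3}\times(\CC^3)^{\times 3}$ has dimension $18$. The fibers contain the orbits of the group generated by the following actions:
\begin{itemize}
\item global rotations $R_i\mapsto R_iQ$, $c_i\mapsto Q^Tc_i$, contributing $3$ dimensions;
\item translations $c_i\mapsto c_i+t$, contributing $3$;
\item global scaling $c_i\mapsto \lambda c_i$, contributing $1$.
\end{itemize}
This gives $18-7=11$. For equality I would show that the generic fiber is finite modulo this group. It suffices to check that the Jacobian of $\Psi_\calE$ at one random real point has rank $11$ in the affine charts of $(\PP^8)^{\times 3}$. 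Alternatively one can argue geometrically: three essential matrices determine the relative poses up to the twisted-pair ambiguity, and the translation scales are then fixed by compatibility. Since $11$ equals the upper bound from the group action, the rank computation alone closes this step.

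\emph{Multidegree support.} The support is $\conv\{5\ee_i+5\ee_j+\ee_k\}$, and its total degree $5+5+1=11$ agrees with the dimension. I would use the projections $\pi_{ij}\colon Y_\calE\to(\PP^8)^{\times 2}$ and $\pi_i\colon Y_\calE\to \PP^8$.
\begin{itemize}
\item Each image $\pi_i(Y_\calE)$ is the essential variety, of dimension $5$. So no multidegree with some coordinate larger than $5$ can be nonzero.
\item Each pairwise image $\pi_{ij}(Y_\calE)$ has dimension $10$: two independent relative poses give $5+5$. Hence the third coordinate must be at least $1$.
\end{itemize}
Together these constraints cut out exactly that simplex. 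By the standard criterion (Castillo et al., \emph{polymatroid} support of multidegrees), the support equals the set of lattice points $\alpha$ with $|\alpha|=11$ and $\sum_{i\in S}\alpha_i\le \dim \pi_S(Y_\calE)$ for every $S$. I would therefore verify the projection dimensions $\dim\pi_{\{i\}}=5$ and $\dim\pi_{\{i,j\}}=10$ and invoke this criterion.

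\emph{Local definition.}
\begin{enumerate}
\item First show that all the listed equations vanish on $Y_\calE$. The $27$ cubics are the determinant and the Demazure cubics of each essential matrix, so they hold pairwise. The $9$ quartics of \cref{eq:quartics} vanish on $Y_\calF\supset Y_\calE$ by the earlier result. The Martyushev sextic \cref{eq:martyushev-sextic} vanishes by its construction; I would cite it or verify it symbolically on the parametrization.
\item Next show that the common vanishing locus $V$ has dimension $11$. I would take a generic point $p\in Y_\calE$ and compute the Jacobian of all $37$ equations at $p$. If its rank equals $24-11=13$ in the affine chart of $(\PP^8)^{\times3}$ (of dimension $24$), then $V$ is smooth of dimension $11$ near $p$. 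So $Y_\calE$ is an irreducible component of $V$.
\item This only shows that $Y_\calE$ is a component. To show $\dim V=11$ I must exclude larger components elsewhere, for example degenerate triples where some $E_{ij}$ has rank at most $1$, or configurations with collinear centers.
\end{enumerate}

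I expect this last step to be the main obstacle. My first attempt would be a stratification by the ranks of the three matrices and by which quartics degenerate. On the main stratum, where all three are rank-$2$ essential matrices, the pairwise conditions give $5+5+5=15$ dimensions. The quartics then force the epipoles to be consistent, mirroring the $Y_\calF$ argument and dropping to about $12$, and the sextic cuts one more. On each degenerate stratum I would bound the dimension directly, supplemented by a numerical irreducible decomposition (witness sets) of $V$ to confirm that no component has dimension exceeding $11$.
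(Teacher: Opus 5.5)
Your steps (1)--(2) for the local definition are exactly the paper's proof. The paper evaluates the Jacobian of the Demazure cubics \eqref{eq:demazure}, the quartics \eqref{eq:quartics} and the sextic at one point of $\widehat{Y}_\calE$, and stops there. It writes ``$\rank J(F_0)=11+3=14$'' on the affine cone in $\CC^{27}$. What actually certifies the claim is a $14$-dimensional kernel, i.e.\ rank $13$, which is your count in the $24$-dimensional chart.

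That check shows $\calV$ is smooth of dimension $11$ at a general point of $Y_\calE$, so $Y_\calE$ is an irreducible component of $\calV$. The paper treats this as what ``locally defined'' means. You are right that the definition as stated in the introduction asks for $\dim\calV=11$ globally. The paper's proof does not supply that, and your step (3) does not close it. A numerical irreducible decomposition is heuristic unless certified, and ``the sextic cuts one more'' is exactly the unproven point.

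Here is a concrete piece the Jacobian check cannot see. Take rank-$2$ essential triples with $e_{12}\sim e_{13}$; this has codimension $2$ in the $15$-dimensional product. There $F_{21}e_{13}=F_{31}e_{12}=0$, so two of the three quartic matrix conditions hold identically. The third, $F_{12}e_{23}\parallel F_{13}e_{32}$, is a single condition, because both lines pass through $e_{12}$. This gives a $12$-dimensional subset of $\calV(\text{cubics},\text{quartics})$ that meets $Y_\calE$ only in a proper closed subset of $Y_\calE$.

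The quintic condition $e_{21}^TF_{23}e_{31}=0$, which would cut this piece further, is not in the list. So you would have to show that $\calM_6$ vanishes identically on none of its components. An exact dimension computation of the ideal, after saturating by the irrelevant ideal, would do it. Otherwise, state the conclusion in the local form the paper actually proves.

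Elsewhere your routes differ from the paper's. For the dimension, the paper uses the calibrated reconstruction theorem: a general fiber is two $\simgrp$-orbits, giving $18-7$. Your orbit bound plus a rank computation for $\Psi_\calE$ is equally valid and more self-contained.

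For the multidegree support you do more than the paper. Its proof gives no argument for the support. The interior values in the figure come from monodromy heuristics, and only the vertex value is proved, in \Cref{prop:essential-multidegree}. The Castillo--Cid-Ruiz--Li--Monta\~no--Zhang criterion with $\dim\pi_i=5$ and $\dim\pi_{ij}=10$ proves the support outright; the value $10$ is visible from the parametrization in the proof of \Cref{prop:essential-multidegree}. The $\pi_{ij}$ inequalities are in fact redundant, since $\alpha_i\le 5$ and $|\alpha|=11$ already force $\alpha_k\ge 1$.

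Finally, the sextic must be read as its $(2,2,2)$-part $\calM_6$ from \eqref{eq:homogenized-martyushev}. The raw sextic \eqref{eq:martyushev-sextic} is not multihomogeneous and vanishes only at properly scaled representatives, so ``vanishes by construction'' is not enough. Use instead the paper's argument: Martyushev's theorem, together with the fact that $\calM_6$ minus the sextic lies in the ideal of the Demazure cubics. Alternatively, substitute the parametrization directly into $\calM_6$.
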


\begin{remark}\label{remark:weak-characterization}
Unlike in~\Cref{thm:uncalibrated-ideal}, we do not yet fully understand the vanishing ideal $\calI (Y_\calE) .$
Our local equations for $Y_\calE$ are a ``weak characterization" in the sense of~\cite{DBLP:conf/iccv/TragerHP15}: they offer a relatively simple description of the variety that
may be sufficient for applications such as optimization over $Y_\calE$ or homotopy continuation solving~\cite{DBLP:conf/cvpr/HrubyDLP22}.
\end{remark}

\subsection{Outline}\label{subsec:outline}

\Cref{sec:background-related} recalls background and previous results so as to establish notation.
In~\Cref{sec:new-constraints}, we discuss the new quartics featuring in~\Cref{thm:uncalibrated-ideal,thm:calibrated-local}: how they were initially discovered using a combination of representation theory and numerical interpolation, their geometric interpretation, and how they fit in with previous studies.
\Cref{sec:proofs} proves the main results.
The proofs are computer-assisted, requiring (sometimes circuitous) calculations with Macaulay2~\cite{M2}.
Finally, we propose open problems for further study in~\Cref{sec:open-problems}.

Code for calculations supporting our results is hosted at the following repository: \url{https://github.com/azoviktor/compatible-fundamental-triples}.

\subsection{Previous work}\label{subsec:prev-work}

Our results build on many other previous works.
Here, we provide a short overview, with many relevant details deferred to~\Cref{sec:background-related}.

Compatible triples of fundamental matrices were first studied in the context of critical configurations for three-view projective reconstruction~\cite{DBLP:journals/ijcv/HartleyK07,DBLP:books/cu/HZ2004}.
These works identify necessary and sufficient conditions for compatibility under suitable genericity assumptions---see~\Cref{prop:hz-compatibility} and~\Cref{rem:collinear} below for precise statements.

Another fruitful line of work began in~\cite{DBLP:conf/cvpr/SenguptaAGGJSB17}, providing a rank constraint for compatibility of $n\ge 3$ cameras in terms of an associated $3n\times 3n$ matrix (when $n=3$, this is the matrix~\eqref{eq:mega-matrix} below.)
One limitation of this characterization is its assumption of \emph{proper scaling.}
In classical two-view geometry, the fundamental matrix can only be recovered up to scale---this explains why the codomain of our map~\eqref{eq:parametrize-compatible} is a product of projective spaces.
The main results of~\cite{DBLP:conf/cvpr/SenguptaAGGJSB17} and follow-up works~\cite{DBLP:conf/cvpr/KastenGGB19,DBLP:conf/iccv/KastenGGB19,DBLP:conf/cvpr/GeifmanKGB20} instead concern the affine variant of our map,
\begin{align}
\widehat{\Psi}_\calK : \calK \times \SO_3 (\CC)^{\times 3} \times \left(\CC^{3}\right)^{\times 3} &\dashrightarrow 
(\CC^{3\times 3})^{\times 3}
\label{eq:affine-map}\\
(K_1, K_2, K_3, R_1, R_2, R_3, c_1, c_2, c_3) &\mapsto 
\left( 
K_i^{-T} R_i [c_j-c_i]_{\times } R_j^T K_j^{-1} \mid 1\le i < j \le 3 
\right). \nonumber 
\end{align}
We emphasize that the closure of the image of this map, which has dimension $\dim Y_\calK$, is \emph{not} the affine cone $\widehat{Y}_\calK$ over $Y_\calK $, which has dimension $3+\dim Y_\calK .$

To circumvent scale-ambiguity, works studying $\widehat{\Psi_{\calK}}$ propose various optimization approaches that aim to recover relative scale factors for each pair of given fundamental matrices.
In contrast, our~\Cref{thm:uncalibrated-ideal} characterizes those algebraic constraints which hold for \emph{any} rescaling of a fundamental matrix triple.
This perspective is shared in recent works~\cite{DBLP:conf/iccv/BratelundR23,CONNELLY2025102446}, which identify precise compatibility conditions for any number of cameras, yet leave open the problem of set-theoretic equations.

For the setting of essential matrices in~\Cref{thm:calibrated-local}, previous works~\cite{DBLP:conf/iccv/KastenGGB19,DBLP:conf/cvpr/GeifmanKGB20,DBLP:journals/ijcv/Martyushev20} once again assume proper scaling.
Thus, it seems that our~\Cref{thm:calibrated-local} is the first result for compatible essential matrix triples that removes this restriction.

\section{Fundamentals}\label{sec:background-related}

We begin by quickly reviewing aspects of geometric computer vision, as presented in the standard text of Hartley and Zisserman~\cite{DBLP:books/cu/HZ2004}.
Later connections to algebraic geometry highlight the more recent perspective of \emph{algebraic vision} surveyed in~\cite{kileel2022snapshot}.

A pinhole camera is a surjective linear projection $\PP^3 \dashrightarrow \PP^2 $, and as such one counts $(2+1)\times (3+1) -1 = 11$ degrees of freedom.
The same count may be observed from the following ``RQ'' decomposition of a general $3\times 4$ matrix,
\begin{equation}\label{eq:matrix-decomp}
P = K R \left( I \, \vert \, -c\right),
\end{equation}
where $K\in \calB_3$, $R\in \SO_3 (\CC),$ and $c\in \CC^3.$
When $K=I$ in~\eqref{eq:matrix-decomp} is the $3\times 3$ identity matrix, we say $P$ is a \emph{calibrated} camera.
The matrix $K$ encodes the camera's \emph{intrinsic parameters} (focal length, aspect ratio, image center, and skew), while $R$ and $c$ comprise the \emph{extrinsic parameters} (orientation and location).  

As in~\eqref{eq:parametrize-compatible}, we associate a \emph{fundamental matrix} $F_{ij} \in \PP (\CC^{3\times 3})$ to any camera pair:
\begin{equation}\label{eq:Fij}
P_i \sim K_i R_i  \left( I \, \vert \, -c_i\right),
\,
P_j \sim K_j R_j  \left( I \, \vert \, -c_j\right),
\quad 
\leadsto 
\quad 
F_{ij} \sim 
K_i^{-T} R_i [c_j-c_i]_{\times } R_j^T K_j^{-1}.
\end{equation}
Here we employ the symbol $\sim $ to emphasize that $P_i, P_j,$ and $F_{ij}$ are \emph{only defined up to scale\footnote{In~\eqref{eq:Fij} and throughout, we abuse notation as follows: for any vector space $V$, we may write $u\sim v$ when each of $u$ and $v$ are either nonzero vectors in $V$ or corresponding elements of $\PP (V).$}.}
With this convention, we note the following relations:
\begin{equation}\label{eq:Fij-transpose}
    F_{ji} \sim F_{ij}^T,
\end{equation}
\begin{equation}\label{eq:Fij-det}
    \det (F_{ij}) = 0.
\end{equation}
When $P_i$ and $P_j$ are both calibrated, $F_{ij}$ is said to be an \emph{essential matrix}; throughout, we forsake the traditional notation $E_{ij}$ for the sake of uniformity.

Fundamental and essential matrices are of vital importance to the \emph{two-camera 3D reconstruction problem}: given $y_{11}, \ldots , y_{1n}, y_{21}, \ldots , y_{2n} \in \PP^2,$ can we recover cameras $P_1, P_2$ and world points $x_1, \ldots , x_n \in 
\PP^3$ with $P_i x_j \sim y_{ij}$ for all $i$ and $j?$
Observe that, if one seeks a reconstruction with uncalibrated cameras, then (exact) solutions to the reconstruction problem exist in $\PGL_4 (\CC)$-orbits,
\begin{equation}\label{eq:solution-orbit}
\PGL_4 (\CC) \cdot (P_1, P_2, x_1, \ldots , x_n) 
= 
\left\{ 
(P_1 H^{-1}, P_2 H^{-1}, H x_1, \ldots , H x_n) \mid H \in \PGL_4 (\CC)
\right\}.
\end{equation}
The fundamental matrix construction realizes a rational quotient of $\PGL_4(\CC)$ acting on the space of camera pairs.
More precisely, there exists a nonempty Zariski-open subset $\mathcal{U} \subset \PP (\CC^{3\times 4})^{\times 2}$ such that the nonempty fibers of the map 
\[
\mathcal{U} \ni (P_1, P_2) \mapsto F_{12}  \in \PP (\CC^{3\times 3})
\]
are precisely the $\PGL_4(\CC)$-orbits of points in $\mathcal{U}$. 
Thus, recovering a fundamental matrix generally means recovering a $\PGL_4(\CC)$-orbit of solutions to the reconstruction problem. This is essentially the \emph{projective reconstruction theorem}~\cite[\S 10.3]{DBLP:books/cu/HZ2004}.

Analogous remarks apply when aiming to reconstruct a calibrated camera pair: one replaces $\PGL_4 (\CC)$ with its subgroup $\simgrp$ of 3D similarity transformations, and a general essential matrix encodes a union of \emph{two} $\simgrp$-orbits which are related by the \emph{twisted pair } (see~\cite[\S 9.6]{DBLP:books/cu/HZ2004}, or~\eqref{eq:twisted-pair} below.)
In addition to the cubic constraint~\eqref{eq:Fij-det}, an essential matrix must satisfy $9$ relations given by the \emph{Demazure cubics}~\cite{demazure1988deux}, 
\begin{equation}\label{eq:demazure}
2 F_{ij} F_{ji} F_{ij} - \trace (F_{ij} F_{ji}) F_{ij} = 0.
\end{equation}

Since the two-camera reconstruction problems described above are fairly well-understood, one would naturally like a comparable understanding of what can be said for three or more cameras.
This is the primary motivation for studying the map~\eqref{eq:parametrize-compatible}.
Dimension counting (\Cref{prop:dimensions}) shows that $Y_\calF $ is not cut out by the three cubics~\eqref{eq:Fij-det}, for $1\le i < j \le 3.$ Thus, additional constraints are needed.

One well-known set of such \emph{compatibility constraints} appears in~\cite[\S 2.3]{DBLP:journals/ijcv/HartleyK07},~\cite[\S 15.4]{DBLP:books/cu/HZ2004}, and employs the language of \emph{epipolar geometry.}
These conditions give rise to the quintics in~\Cref{thm:uncalibrated-ideal}.
Consider first a general point $(F_{12}, F_{13}, F_{23}) \in Y_\calF $. 
We may assume this point lies in the image of the map $\Psi_{\calF}$, and consequently $\rank (F_{ij}) =2$ for all $i$ and $j.$
We define the $ji$-th \emph{epipole} of a fundamental matrix $F_{ij}$ to be the $3\times 1$ vector $e_{ji} \sim  \ker (F_{ij})  \in \PP^2$.
Geometrically, $e_{ji}$ may be understood as the projection of the $i$-th camera center $c_i$ under the $j$-th camera.
To see this, let $\hat{c}_i$ denote the image of $c_i$ under the usual embedding $\CC^3 \hookrightarrow \PP^3$, and calculate
\[
F_{ij} \left( P_j \hat{c}_i \right) \sim  
K_i^{-T} R_i [c_j - c_i]_\times  \left( I \, \vert \, -c_i \right) \hat{c}_i = 0.
\]
Observe that the camera centers $c_1, c_2, c_3\in \CC^3$ are \emph{non-collinear} precisely when
\begin{equation}\label{eq:noncollinearity-conditions}
e_{12} \not\sim e_{13},
\quad 
e_{21} \not\sim e_{23},
\, \,
\text{ and }
\, \, 
e_{31} \not\sim e_{32},
\end{equation}
and collinear when
\begin{equation}\label{eq:collinearity-conditions}
e_{12} \sim  e_{13},
\quad 
e_{21} \sim e_{23},
\, \,
\text{ and }
\, \, 
e_{31} \sim e_{32}.
\end{equation}
We call~\eqref{eq:noncollinearity-conditions} the \emph{noncollinearity conditions}, and~\eqref{eq:collinearity-conditions}
the \emph{collinearity conditions.}

\begin{proposition}\label{prop:hz-compatibility}\cite[Thm~15.6]{DBLP:books/cu/HZ2004}
A fundamental matrix triple $(F_{12}, F_{13}, F_{23}) $ satisfying the noncollinearity conditions~\eqref{eq:noncollinearity-conditions} lies in the image of $\Psi_\calF$ if and only if
\begin{equation}\label{eq:triangulation-conditions}
e_{13}^T F_{12} e_{23}
=
e_{12}^T F_{13} e_{32}
=
e_{21}^T F_{23} e_{31}
=
0.
\end{equation}
\end{proposition}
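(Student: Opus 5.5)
We prove the two implications separately. Throughout, a ``fundamental matrix triple'' has each $F_{ij}$ of rank exactly $2$, so the epipoles $e_{ji}\sim\ker F_{ij}$ and $e_{ij}\sim\ker F_{ij}^T$ are well defined points of $\PP^2$.

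\emph{Necessity.} Suppose the triple is $\Psi_\calF(K_i,R_i,c_i)$ with cameras $P_i = K_iR_i(I\,\vert\,-c_i)$. Then $e_{ji}\sim P_j\hat c_i$ by the computation preceding the statement. The defining property of $F_{ij}$ is that $(P_i X)^T F_{ij}(P_j X)=0$ for every $X\in\PP^3$: indeed $(P_iX)^TF_{ij}P_jX$ is a scalar multiple of $x^T[c_j-c_i]_\times x$ in affine coordinates. Take $X=\hat c_3$. This gives $e_{13}^TF_{12}e_{23}=0$. Likewise $X=\hat c_2$ in the pair $(1,3)$ and $X=\hat c_1$ in the pair $(2,3)$ give the other two equations in~\eqref{eq:triangulation-conditions}.

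\emph{Sufficiency.} This is the substantive direction, and the plan is to build cameras explicitly.

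First, use the standard two-view fact (projective reconstruction theorem) that every rank-$2$ $F_{12}$ is realized by a camera pair. The canonical choice is $P_1=(I\,\vert\,0)$ and $P_2=([e_{21}]_\times F_{12}^T\,\vert\,e_{21})$, up to the conventions for the transpose.

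Second, look for $P_3$ with $F_{13}$ and $F_{23}$ as its fundamental matrices with $P_1,P_2$. The $P_3$ compatible with $F_{13}$ relative to $P_1$ form a family parametrized by a $\PGL_4$-stabilizer of $P_1$, and likewise for $F_{23}$. More concretely, every point $X$ must project into $P_3$ so that $P_3X$ lies on the epipolar lines $F_{13}^TP_1X$ and $F_{23}^TP_2X$. By noncollinearity these two lines are generically distinct, so define $P_3X:=(F_{13}^TP_1X)\times(F_{23}^TP_2X)$. This is a quadratic map in $X$.

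Third, and this is the key step, show that conditions~\eqref{eq:triangulation-conditions} force this quadratic map to be a multiple of a linear map. Equivalently, the conditions cut down the $4\times 3 = 12$-ish degrees of freedom to consistency. Alternatively, count: a general $P_3$ relative to fixed $P_1,P_2$ has $11$ parameters. Fixing $F_{13}$ imposes $7$ conditions, and fixing $F_{23}$ imposes $7$ more. The triple variety has dimension $18=3\cdot 7-3$, so the three equations in~\eqref{eq:triangulation-conditions} are exactly the codimension count.

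To make the third step rigorous, reduce by the $\PGL_3^{\times 3}$ action on images and the $\PGL_4$ action on space to a normal form. Using noncollinearity, send the three camera centers and the epipoles to coordinate points, say $e_{12},e_{13}$ distinct, and so on. In this normal form each $F_{ij}$ with prescribed epipoles is a rank-$2$ matrix with $4$ free parameters (a homography between pencils of epipolar lines, $3$ parameters up to scale). The equations~\eqref{eq:triangulation-conditions} then say that the epipolar line of $e_{23}$ in image $1$ passes through $e_{13}$, and so on cyclically; this is the trifocal plane being consistent. One then checks directly that these parameters match those coming from an explicit camera triple in normal form, e.g.\ $P_1=(I\,\vert\,0)$ with centers at coordinate points. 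I expect this normal-form bookkeeping, showing the line-pencil homographies can be simultaneously realized by one set of three cameras, to be the main obstacle. The dimension count suggests it is exactly one consistency condition per pair, so it should reduce to solving linear equations for the entries of $P_3$ given $P_1,P_2$.
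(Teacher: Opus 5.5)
The paper gives no proof of this statement; it only cites Hartley--Zisserman. So I am judging your argument on its own terms.

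Your necessity direction is correct. $P_i^TF_{ij}P_j$ is skew-symmetric, and evaluating the associated quadratic form at the camera centers gives~\eqref{eq:triangulation-conditions}.

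The gap is in sufficiency, which is the real content of the statement. The decisive claim is never established: that~\eqref{eq:triangulation-conditions} forces $Q(X)=(F_{13}^TP_1X)\times(F_{23}^TP_2X)$ to factor as a linear form times a linear map $X\mapsto P_3X$. In its place you give a parameter count ($11-7-7$, and $18=21-3$). That count only shows that three conditions is the \emph{expected} number. It cannot show that these particular three equations have no non-realizable solutions on the noncollinear locus; a codimension count never detects extraneous zeros (cf.~\S\ref{subsec:comparison}). The normal-form computation that would actually settle the question is deferred (``I expect \ldots''). So the existence of $P_3$ is asserted, not proved.

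Your Step 2 can be finished directly, as follows.
\begin{enumerate}
\item Put $\lambda_1=F_{13}e_{32}$ and $\lambda_2=F_{23}e_{31}$. Both are nonzero because $e_{31}\not\sim e_{32}$.
\item If $\lambda_1^TP_1X=0$, then $F_{13}^TP_1X$ is zero or the line $e_{31}\vee e_{32}$. Likewise, if $\lambda_2^TP_2X=0$, then $F_{23}^TP_2X$ is zero or the same line. Hence $Q$ vanishes on the plane $P_1^T\lambda_1$ as soon as $P_1^T\lambda_1\sim P_2^T\lambda_2$.
\item This coincidence is exactly what~\eqref{eq:triangulation-conditions} gives. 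The second and third equations say $\lambda_1=e_{12}\vee e_{13}$ and $\lambda_2=e_{21}\vee e_{23}$, so both planes contain the baseline of $P_1,P_2$. The first equation says $F_{12}^Te_{13}\sim\lambda_2$, so $\lambda_1,\lambda_2$ are corresponding epipolar lines and the planes coincide. Call this common plane $\Pi$.
\item Each component of $Q$ is a quadric vanishing on $\Pi$, so $Q=\ell_\Pi\cdot P_3$ for a $3\times 4$ matrix $P_3$, where $\ell_\Pi$ is the linear form of $\Pi$. Since $(F_{13}^TP_1X)^TQ(X)\equiv 0$, both $P_1^TF_{13}P_3$ and $P_2^TF_{23}P_3$ are skew-symmetric.
\end{enumerate}

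Several checks remain after this.
\begin{itemize}
\item You need $\rank P_3=3$. This holds because the image of $Q$ is dense, which uses the noncollinearity conditions.
\item You need the center of $P_3$ to differ from the centers of $P_1$ and $P_2$. If $P_3=HP_1$, then $F_{23}H\sim F_{12}^T$, forcing $e_{23}\sim e_{21}$. Symmetrically, $P_3=HP_2$ would force $e_{13}\sim e_{12}$.
\item With distinct centers, $\{F : P_i^TFP_j \text{ skew}\}$ is one-dimensional, so $F_{13},F_{23}$ are the fundamental matrices of the new pairs.
\item The statement concerns $\im\Psi_\calF$ over $\CC$. You must choose a generic $H\in\PGL_4(\CC)$ so that each $P_iH^{-1}=[M_i\,\vert\,m_i]$ has $M_i$ invertible with $M_i=K_iR_i$, $K_i\in\calB_3$, $R_i\in\SO_3(\CC)$. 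Complex RQ can fail, for example when the last row of $M_i$ is isotropic, and your construction never addresses this.
\end{itemize}
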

\begin{remark}\label{rem:collinear}
\cite[Example 3.3]{DBLP:conf/iccv/BratelundR23} shows that a fundamental matrix triple satisfying~\eqref{eq:triangulation-conditions} and the collinearity conditions~\eqref{eq:collinearity-conditions} need not lie in the image of $\Psi_\calF.$
\end{remark}

In general, two image points $y_i, y_j\in \PP^2$ with $y_i\not\sim e_{ji}, y_j\not\sim e_{ij}$ are the projections of a world point $x\in \PP^3$ when $y_i^T F_{ij} y_j=0.$ 
Thus, the \emph{triangulation conditions} of~\eqref{eq:triangulation-conditions} simply express this constraint when $x$ is one of the three camera centers.

\Cref{prop:hz-compatibility} involves open conditions on the triple $(F_{12}, F_{13}, F_{23})$: namely, the noncollinearity conditions of~\eqref{eq:noncollinearity-conditions}, as well as the implicit condition $\rank (F_{ij})=2.$
Expressing the epipoles via the adjugate matrices $\adj F_{ij}$ yields closed conditions:
\begin{equation}\label{eq:quintics}
\adj (F_{13}) F_{12} \adj (F_{32}) 
=
\adj (F_{12})  F_{13} \adj (F_{23}) 
=
\adj (F_{21})  F_{23} \adj (F_{13}) 
=
0.
\end{equation}
Equations~\eqref{eq:quintics} altogether consist of $27$ multihomogeneous polynomials---$9$ in each group with multidegrees $3\ee_{i} + \ee_{j} + \ee_{k}\in \ZZ^3.$ 
In~\Cref{thm:uncalibrated-ideal}, only two of these three groups are needed, due to relations involving the yet-to-be-described quartics.

\Cref{prop:hz-compatibility} amounts to the fact that $Y_\calF$ is locally defined by equations~\eqref{eq:Fij-det} and~\eqref{eq:quintics}.
\Cref{ex:felix-martin} below shows they do not cut out $Y_\calF $ set-theoretically.

We now move on to the septics in~\Cref{thm:uncalibrated-ideal}.
These equations are closely tied to a rank constraint first appearing in~\cite{DBLP:conf/cvpr/SenguptaAGGJSB17}, which serves as the basis of many subsequent constraints in~\cite{DBLP:conf/cvpr/KastenGGB19,DBLP:conf/iccv/KastenGGB19,DBLP:conf/cvpr/GeifmanKGB20,DBLP:journals/ijcv/Martyushev20}.
These works associate a $9\times 9$ matrix $F$ to any point in the image of the map $\widehat{\Psi_\calF}$ defined by~\eqref{eq:affine-map}.
More generally, for $(\hat{F}_{12}, \hat{F}_{13}, \hat{F}_{23}) \in \widehat{Y}_\calF$, we may associate the $9\times 9$ matrix 

\begin{equation}\label{eq:mega-matrix}
F = 
\begin{bmatrix}
    0 & \hat{F}_{12} & \hat{F}_{13} \\
    \hat{F}_{12}^T & 0 & \hat{F}_{23} \\
    \hat{F}_{13}^T & \hat{F}_{23}^T & 0 
\end{bmatrix}.
\end{equation}

Note that rescaling a single $F_{ij}$ does \emph{not} rescale $F,$ and thus conditions involving $F$ should be stated (at least \emph{a priori}) over $\widehat{Y}_\calF$ rather than $Y_\calF.$

\begin{proposition}\label{prop:rank-megamatrix}\cite{DBLP:conf/cvpr/SenguptaAGGJSB17}
For all $(\hat{F}_{12}, \hat{F}_{13}, \hat{F}_{23}) \in \widehat{Y}_\calF$, we have $\rank (F) \le 6.$
\end{proposition}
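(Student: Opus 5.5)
The plan is to prove the bound first on the image of the affine map $\widehat{\Psi}_\calF$ from \eqref{eq:affine-map}, and then pass to the closure. For $\widehat{Y}_\calF$, this closure step needs one extra remark: over a parametrized triple, rescaling the $F_{ij}$ independently does not preserve the form of $F$. So the first task is to settle exactly which set the statement refers to. Following \cite{DBLP:conf/cvpr/SenguptaAGGJSB17}, I will prove the bound for triples in the image of $\widehat{\Psi}_\calF$, i.e.\ the properly scaled ones. Since $\rank(F)\le 6$ is a closed condition (vanishing of all $7\times 7$ minors), it then extends to the Zariski closure of that image.

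The key step is an explicit factorization. Write $P_i = K_iR_i(I\mid -c_i)$ and set
\[
A_i = K_i^{-T}R_i \in \CC^{3\times 3}, \qquad B_i = K_i^{-T}R_i[c_i]_\times R_i^T K_i^{-1}\cdot(\text{suitable}) .
\]
A cleaner route is to expand $[c_j-c_i]_\times = [c_j]_\times - [c_i]_\times$, which gives
\[
\hat F_{ij} = A_i[c_j]_\times A_j'^{\,T} - A_i[c_i]_\times A_j'^{\,T}, \qquad A_j' = K_j^{-T}R_j .
\]
Define the $3\times 3$ blocks $U_i = A_i$ and $V_i = A_i[c_i]_\times$. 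Note that $V_i^T = -[c_i]_\times A_i^T$, by skew-symmetry. Then
\[
\hat F_{ij} = U_i\, [c_j]_\times U_j^T \;-\; V_i U_j^T ,
\]
and with $[c_j]_\times U_j^T = -V_j^T$ this becomes
\[
\hat F_{ij} = -U_iV_j^T - V_iU_j^T .
\]
Now form the $9\times 6$ matrices $U=\begin{bmatrix}U_1\\U_2\\U_3\end{bmatrix}$ and $V=\begin{bmatrix}V_1\\V_2\\V_3\end{bmatrix}$. The symmetric matrix
\[
M = -(UV^T + VU^T)
\]
has rank at most $6$, and its off-diagonal blocks are exactly $\hat F_{ij}$. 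Its diagonal blocks are $-(U_iV_i^T+V_iU_i^T)$. Since $U_i V_i^T = -A_i[c_i]_\times A_i^T$ is skew-symmetric, adding it to its transpose gives zero, so the diagonal blocks vanish. Hence $M=F$ and $\rank F\le 6$. Checking that these diagonal blocks really vanish is the crux of the argument.

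The main obstacle is bookkeeping rather than ideas. One has to verify the sign conventions and that $\hat F_{ji}=\hat F_{ij}^T$ holds exactly, not just up to scale, under the affine parametrization; this is what makes $F$ symmetric and equal to $M$. I will check this with the identity $[c]_\times^T=-[c]_\times$, which gives $(A_i[c_j-c_i]_\times A_j^T)^T = A_j[c_i-c_j]_\times A_i^T$. Finally, I will note that the set where $\rank F\le 6$ is Zariski closed, so the bound persists on the closure.
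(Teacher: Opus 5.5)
\paragraph{Comparison with the paper.} Your factorization is correct, and it takes a different route from the paper's. With $U_i=A_i$ and $V_i=A_i[c_i]_\times$ you get $F=-(UV^T+VU^T)$ with $U,V\in\CC^{9\times 3}$. (You wrote $9\times 6$; it is $[U\;V]$ that is $9\times 6$.) This bounds the rank by $6$ directly, with no genericity assumption. The paper instead conjugates $F$ by a block-diagonal matrix to reduce to blocks $u_{ij}[c_j-c_i]_\times$. It then writes down three kernel vectors, and their independence needs non-collinear centers.

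\paragraph{The gap.} The problem is the set on which you prove the bound. You deliberately restrict to properly scaled triples and then pass to the closure of $\im\widehat{\Psi}_\calF$. In the paper, however, $\widehat{Y}_\calF$ is the affine cone over $Y_\calF$, which has dimension $21$. It contains the independently rescaled triples $(u_{12}\hat F_{12},u_{13}\hat F_{13},u_{23}\hat F_{23})$ with $u_{ij}\in\CC^*$. The paper explicitly remarks that the closure of the image of the affine map is \emph{not} this cone. So your closure step never reaches these points, and, as you note yourself, rescaling the blocks independently is not a rescaling of $F$. The paper's proof avoids this by carrying the scalars $u_{ij}$ throughout.

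\paragraph{How to fix it.} The missing step is short. Given $u_{ij}\in\CC^*$, set $\mu_1=\sqrt{u_{12}u_{13}/u_{23}}$, $\mu_2=u_{12}/\mu_1$ and $\mu_3=u_{13}/\mu_1$, so that $\mu_i\mu_j=u_{ij}$ for all $i<j$. The rescaled megamatrix is then $DFD$ with $D=\mathrm{diag}(\mu_1I_3,\mu_2I_3,\mu_3I_3)$ invertible, so its rank is unchanged. Equivalently, you can replace $U_i,V_i$ by $\mu_iU_i,\mu_iV_i$ in your factorization. Another option is to let $K_i$ range over unnormalized representatives, since $K_i\mapsto\lambda_iK_i$ scales $\hat F_{ij}$ by $(\lambda_i\lambda_j)^{-1}$. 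Rescaled parametrized triples are dense in $\widehat{Y}_\calF$. Only after this step does your closure argument complete the proof.
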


We provide a short, self-contained proof.

\begin{proof}
It will suffice, by lower-semicontinuity of rank, to prove the claim when 
\[
\hat{F}_{ij} = u_{ij} K_i^{-T} R_i [c_j-c_i]_{\times } R_j^T K_j^{-1} ,
\quad 
K_i\in \calB_3,
\quad 
u_{ij} \in \CC^* ,
\quad 
1\le i < j \le 3,
\]
and the centers $c_1,c_2,c_3\in \CC^3$ are non-collinear.
Furthermore, we may assume that $F$ is skew-symmetric, due to the identity
\begin{equation}\label{eq:mega-matrix-centers}
\begin{bmatrix}
    0 & u_{12} [c_2-c_1]_\times & u_{13} [c_3 - c_1]_\times  \\
     u_{12} [c_1-c_2]_\times &   
      0 & u_{23} [c_3-c_1]_\times \\
     u_{13} [c_1-c_3]_\times  & u_{23} [c_2-c_3]_\times &    0 
\end{bmatrix}
=
\left(D_{R,K}\right)^T F D_{R,K},
\end{equation}
where $D_{R,K}$ is an invertible matrix with diagonal blocks $R_i K_i$.
We conclude upon verifying that kernel of the matrix~\eqref{eq:mega-matrix-centers} contains three independent vectors,
 \begin{equation}\label{eq:kernel-mega-matrix}
 \begin{bmatrix}
u_{23} (c_2 - c_1) \\
u_{13} (c_1 - c_2) \\
0
 \end{bmatrix},
 \quad 
  \begin{bmatrix}
u_{23} (c_3 - c_2) \\
0\\
u_{12} (c_2 - c_3) 
 \end{bmatrix},
 \quad 
  \begin{bmatrix}
  0\\
u_{13} (c_3 - c_2) \\
u_{12} (c_2 - c_3) 
 \end{bmatrix}.
 \end{equation}

\end{proof}
\Cref{prop:rank-megamatrix} implies that the $7\times 7$ minors of $F$ all vanish on $\widehat{Y}_\calF .$
Not all of these equations vanish on $Y_\calF$, for the simple reason that they are not multihomogeneous.
Still, a large subset of these minors \emph{are} multihomogeneous: one may take
\begin{equation}\label{eq:septics}
\det \left( F_{ \left[  R^c, C^c \right]}\right) = 0,
\quad 
F \text{ as in~\eqref{eq:mega-matrix}},
\end{equation}
and $R,C \subset [9]$ indexing two deleted rows and columns having the form
\begin{equation}
\begin{split}
&R = \{ 
3(k_1-1) + i_1, \, 
3(k_2-1) + i_2
\}, \\ 
&C = \{ 
3(k_1-1) + j_1,
3(k_2-1) + j_2
\}, \\
&\text{with } 
1 \le i_1 \le j_1 \le 3,
\quad 
1 \le i_2 \le j_2 \le 3,
\quad 
1 \le k_1 < k_2 \le 3.
\end{split}\label{eq:septic-indices}
\end{equation}
The $108$ septics described in~\eqref{eq:septics},~\eqref{eq:septic-indices} are exactly those appearing in~\Cref{thm:uncalibrated-ideal}.

The matrix~\eqref{eq:mega-matrix} also appears prominently in the study of compatibility for essential matrices~\cite{DBLP:conf/iccv/KastenGGB19,DBLP:conf/cvpr/GeifmanKGB20,DBLP:journals/ijcv/Martyushev20}.
Most relevant to our work are the equations of Martyushev.
Their role is analogous to the previously-discussed septics:
despite the original assumption of proper scaling, we can use partial information from these equations to construct new equations (in this case, just one) with the needed multihomogeneity.

Following~\cite{DBLP:journals/ijcv/Martyushev20}, we define a ``diamond operator" on $n\times n$ matrices,
\begin{equation}\label{eq:diamond}
A \diamond B = \adj (A-B) - \adj (A) - \adj (B).
\end{equation}
Martyushev's equations can be written as follows: 
\begin{align}
\label{eq:martyushev-cubic}
\trace (\hat{F}_{12}\hat{F}_{23}\hat{F}_{31}) &= 0,\\
\label{eq:necF2}
\hat{F}_{ij}^T \hat{F}_{ij}\hat{F}_{jk} - \frac{1}{2}\trace(\hat{F}_{ij}^T \hat{F}_{ij})\, \hat{F}_{jk} + \adj (\hat{F}_{ij})\hat{F}_{ki}^T &= 0,\\
\label{eq:necF3}
\hat{F}_{jk}^T \adj (\hat{F}_{ij}) + \adj(\hat{F}_{jk})\hat{F}_{ij}^T + (\hat{F}_{ij}\hat{F}_{jk}) \diamond \hat{F}_{ki}^T &= 0,\\
\label{eq:necF4}
\trace^2(\hat{F}^2) - 16\trace(\hat{F}^4) + 24\sum\limits_{i < j}\trace^2(\hat{F}_{ij}^T \hat{F}_{ij}) &= 0,\\
\label{eq:martyushev-sextic}
\trace^3(F^2) - 12\trace(F^2)\trace(F^4) + 32\trace(F^6) &= 0,
\end{align}
for all distinct $i, j, k \in \{1, 2, 3\}$. 
\begin{remark}\label{remark:result-of-Martyushev}
The main result of Martyushev~\cite[Theorems 4 and 5]{DBLP:journals/ijcv/Martyushev20} implies that $(F_{12}, F_{13}, F_{23}) \in Y_\calE,$ if and only if there exists a representative
$(\hat{F}_{12}, \hat{F}_{13}, \hat{F}_{23}) \in \widehat{Y}_\calE$ with each $\hat{F}_{ij} \ne 0$ and where~\cref{eq:demazure,eq:martyushev-cubic,eq:necF2,eq:necF3,eq:necF4,eq:martyushev-sextic} all vanish.
\end{remark}

Note that~\eqref{eq:necF2}~\eqref{eq:necF3} consist of nine equations each.
Among Martyushev's equations, only~\eqref{eq:martyushev-cubic} is multihomogeneous, of degree $\ee_1 + \ee_2 + \ee_3.$
However, a multihomogeneous equation $\calM_6 (F)$ may be extracted from the sextic~\eqref{eq:martyushev-sextic} by taking its degree-$(2\ee_1 + 2\ee_2 + 2\ee_3)$ part : explicitly, 
\begin{equation}\label{eq:homogenized-martyushev}    
\calM_6 (F) :=
\pi 
\left( \trace^3(F^2) - 12\trace(F^2)\trace(F^4) + 32\trace(F^6) \right),
\end{equation}
where $\pi : \CC [F]_{6} \to \CC [F_{12}, F_{13}, F_{23}]_{(2,2,2)}$ is the natural projection.
A simple Gr\"{o}bner basis computation checks the ideal membership
\[
\calM_6 (F) - \left( \trace^3(F^2) - 12\trace(F^2)\trace(F^4) + 32\trace(F^6) \right)
\in 
\Big\langle 
\text{equations \eqref{eq:demazure}, }  1\le i < j \le 3 
\Big\rangle .
\]
Martyushev's result, \Cref{remark:result-of-Martyushev}, implies that for any $(F_{12}, F_{13}, F_{23}) \in Y_\calE,$ there exists a nonzero representative
$(\hat{F}_{12}, \hat{F}_{13}, \hat{F}_{23}) \in \widehat{Y}_\calE$ where $\mathcal{M}_6 (F)$ vanishes.

Since $\mathcal{M}_6 (F)$ is multi-homogeneous, we have $\mathcal{M}_6 (F)$ vanishing on $\widehat{Y}_\calE$ and $Y_\calE .$

\section{New Quartic Constraints}\label{sec:new-constraints}

We now introduce the quartics of~\Cref{thm:uncalibrated-ideal},

\begin{equation}\label{eq:quartics}
\begin{split}
F_{12} \adj (F_{32}) F_{31} - F_{13} \adj (F_{23}) F_{21} & = \\
F_{31} \adj (F_{21}) F_{23} - F_{32} \adj (F_{12}) F_{13}
& = \\
F_{21} \adj (F_{31}) F_{32} - F_{23} \adj (F_{13}) F_{12}
& = 0. 
\end{split}
\end{equation}

\begin{remark}\label{remark:quartics-from-symmetric-matrices}
Note that~\eqref{eq:quartics} can be formulated concisely as the following condition: 
\begin{equation}\label{equation:quartic-sandwich}
\text{for all permutations $(i,j,k)$ of $(1,2,3)$, }
F_{ij} \adj (F_{kj}) F_{ki} 
\text{ is symmetric.}    
\end{equation}
Thus three $3\times 3$ matrix equations \eqref{eq:quartics} provide a total of $9$ distinct quartic polynomial equations.
These $9$ equations are the form of the quartics we first observed, after analyzing the results of a computation described in \Cref{subsec:discovery}. 
\end{remark}

\subsection{Interpolation with symmetry}\label{subsec:discovery}

We used numerical interpolation combined with representation theory to discover the quartic constraints \eqref{eq:quartics}. Specifically, we aim to compute a basis of the degree-$4$ component of the vanishing ideal
\[ \calI(Y_\calF)_4 = \calI(Y_\calF) \cap \CC[F_{12},F_{13},F_{23}]_4. \]
The ambient space of degree-4 polynomials in the 27 variables (the entries of $F_{12}, F_{13}, F_{23}$) has dimension
\[ \dim \CC[F_{12},F_{13},F_{23}]_4 = \binom{27+4-1}{4} = 27405. \]
Naive interpolation would require computing the nullspace of a $27405 \times 27405$ Vandermonde-type matrix, which is computationally infeasible due to both memory and numerical instability. To overcome this, we leverage the symmetry of the problem. Both $Y_\calF$ and $Y_\calE$ are invariant under the action of the reductive group
\[ G = \SO_3(\CC) \times \SO_3(\CC) \times \SO_3(\CC) \times (\CC^*)^3, \]
which acts on $Y_\calF$ by
\[ \regularMap{\varphi}{G \times Y_\calF}{Y_\calF}{(S_1, S_2, S_3, \lambda_1, \lambda_2, \lambda_3, F_{12}, F_{13}, F_{23})}{\left(\lambda_1S_1F_{12}S_2^\top, \lambda_2S_1F_{13}S_3^\top, \lambda_3S_2F_{23}S_3^\top\right).} \]
We note that $\varphi$ also restricts to an action on $Y_\calE$. 
Since these actions preserve the degrees of homogeneous polynomials, $G$ acts on $\calI(Y_\calF)_4$ and $\calI(Y_\calE)_4$.

The Lie group $G$ is reductive, as its Lie algebra $\mathfrak{g}$ is reductive, and the center $Z(\mathfrak{g}) = \CC^3$ acts by diagonalizable endomorphisms on homogeneous polynomials (indeed, it acts by scalings). Thus, the $G$-module $\CC[F_{12},F_{13},F_{23}]_4$ is completely reducible \cite[Chapter 2, Section 6]{lie-algebras-humphreys}, admitting the isotypic decomposition
\begin{equation}\label{eq:isotypics} \CC[F_{12},F_{13},F_{23}]_4 = \bigoplus_{j=1}^k V_j. \end{equation}
Its $G$-invariant subspace $\calI(Y_\calF)_4$
decomposes accordingly \cite[Chapter 1, p. 70]{compact-lie-groups}:
\[ \calI(Y_\calF)_4 = \bigoplus_{j=1}^k \calI(Y_\calF)_4 \cap V_j. \]
To interpolate polynomials in $\calI(Y_\calF)_4$, it thus suffices to compute each isotypic component $\calI(Y_\calF)_4 \cap V_j$ independently. To obtain a basis of $\calI(Y_\calF)_4 \cap V_j$ we pick a basis $\{v_1, \dots, v_{m_j}\}$ of $V_j$, sample $m_j$ generic points $\{p_1,\dots,p_{m_j}\} \subset Y_\calF$, construct a Vandermonde-type matrix
\[ A = \begin{bmatrix}
    v_1(p_1) & \dots & v_{m_j}(p_1) \\ \vdots & \ddots & \vdots \\ v_1(p_{m_j}) & \dots & v_{m_j}(p_{m_j})
\end{bmatrix} \in \CC^{m_j \times m_j} \]
and compute its nullspace. Then, for every vector $(n_1, \dots, n_{m_j})$ from a basis of $\mathrm{null}(A)$ we obtain a basis element of $\calI(Y_\calF)_4 \cap V_j$ as $\sum_{i=1}^{m_j} n_iv_i$, thereby obtaining a full basis of $\calI(Y_\calF)_4 \cap V_j$.

Using techniques from representation theory, we decompose $\CC[F_{12},F_{13},F_{23}]_4$ into isotypic components and find that
\[ k = 372, \quad \max_{j \in [k]} m_j = 375. \]
In contrast to the naive $27405 \times 27405$ interpolation problem, isotypic decomposition reduces the largest nullspace computation to a manageable $375 \times 375$ system.

Although this $375 \times 375$ nullspace computation is already computationally tractable, we can reduce the size of the problem even further by leveraging the structure of highest weight vectors within each isotypic component. Let $H_j$ be the highest weight space of $V_j$, i.e., if $V_j = W_j^{\oplus a_j}$, where $W_j$ is irreducible, then $H_j$ is the direct sum of the 1-dimensional highest weight spaces of $W_j$'s. So, $\dim H_j = a_j$. It is enough to compute a basis of
\[ H_j' = \calI(Y_\calF)_4 \cap H_j \]
since all the constraints can be obtained by applying the group action:
\[ \calI(Y_\calF)_4 \cap V_j = G \cdot H_j' := \mathrm{span}_\CC \, \{g\cdot h' \;|\; g \in G, h' \in H_j'\}. \]
If we look at the maximum dimension of the highest weight space across all isotypic components, we see that
\[ \max_{j \in [k]} a_j = 5. \]
This means that, by working with highest weight vectors, the interpolation task reduces to computing nullspaces of size at most $5 \times 5$, a dramatic improvement over both the original $27405 \times 27405$ problem and the already-manageable $375 \times 375$.

For decomposing $\CC[F_{12}, F_{13}, F_{23}]_4 \cong \mathrm{Sym}^4(\CC[F_{12}, F_{13}, F_{23}]_1)$ into irreducible subrepresentations (and then into isotypic components) we proceed as follows:
\begin{enumerate}[label=(\alph*)]
    \item Since $\mathfrak{g}$ is reductive, it has a root space decomposition,
    \[ \mathfrak{g} = \mathfrak{h} \, \oplus \, \underbrace{\bigoplus_{\alpha \in \Phi^+} \mathfrak{g}_\alpha}_{\mathfrak{n}^+} \, \oplus \, \underbrace{\bigoplus_{\alpha \in \Phi^-} \mathfrak{g}_\alpha}_{\mathfrak{n}^-},  \]
    where $\mathfrak{h}$ is a 6-dimensional Cartan subalgebra of $\mathfrak{g}$, indices $\Phi^+$ (resp. $\Phi^-$) are positive (resp. negative) roots, and $\mathfrak{n}^+$ (resp. $\mathfrak{n}^-$) is positive (resp. negative) root space \cite[Chapters 7,8]{brian-hall-representation-theory}. If we represent $\mathfrak{g}$ as
    \[ \mathfrak{g} = \underbrace{\mathfrak{so}(3, \CC)}_{\mathfrak{g}_1} \, \oplus \, \underbrace{\mathfrak{so}(3, \CC)}_{\mathfrak{g}_2} \, \oplus \, \underbrace{\mathfrak{so}(3, \CC)}_{\mathfrak{g}_3} \, \oplus \, \underbrace{\CC^3}_{\mathfrak{g}_4}, \]
    then 
    \[ \mathfrak{h} = \oplus_{j=1}^4 \mathfrak{h}_j, \quad \mathfrak{n}^+ = \oplus_{j=1}^4 \mathfrak{n}^+_j, \quad \mathfrak{n}^- = \oplus_{j=1}^4 \mathfrak{n}^-_j, \] where the $\mathfrak{h}_j$ are Cartan subalgebras of $\mathfrak{g}_j$, and each $\mathfrak{n}^+_j$ and $\mathfrak{n}^-_j$ are the positive and negative root spaces of $\mathfrak{g}_j$. Since $\mathfrak{g}_4$ is abelian, we have that $\mathfrak{h}_4 = \mathfrak{g}_4$ and both $\mathfrak{n}^+_4$ and $\mathfrak{n}^-_4$ are trivial. As for $j=1,2,3$, we fix $\mathfrak{h}_j = \langle J_3 \rangle$, $\mathfrak{n}^+_j = \langle J_+ \rangle$, $\mathfrak{n}^-_j = \langle J_- \rangle$, where
    \[
    J_3 = \matrixr{0 & -i & 0 \\ i & 0 & 0 \\ 0 & 0 & 0}, \quad
    J_+ = \matrixr{0 & 0 & -1 \\ 0 & 0 & -i \\ 1 & i & 0}, \quad
    J_- = \matrixr{0 & 0 & 1 \\ 0 & 0 & -i \\ -1 & i & 0}.
    \]
    \item Decompose $V = \CC[F_{12}, F_{13}, F_{23}]_1$ into weight spaces \cite[Chapter 6, Section 20]{lie-algebras-humphreys} as
    \begin{equation} \label{eq:weight-space-decomposition}
        \CC[F_{12}, F_{13}, F_{23}]_1 = \bigoplus_{\lambda \in \mathfrak{h}^*} V_\lambda,
    \end{equation}
    where
    \[ V_\lambda = \{v \in V \;|\; X\cdot v = \lambda(X)v \;\; \forall X \in \mathfrak{h}\}. \]
    In other words, we find a basis of $\CC[F_{12}, F_{13}, F_{23}]_1$ that consists of simultaneous eigenvectors of the linear operators from $\mathfrak{h}$.
    Since we know the eigenvectors for each $\mathfrak{h}_j$, finding a simultaneous eigenbasis for $\mathfrak{h}$ is straightforward. The eigenvectors for $\mathfrak{h}_4$ are simply the variables, since $\mathfrak{g}_4$ acts by scalings.
    Each $\mathfrak{h}_j$ for $j=1,2,3$ acts on the rows or columns of two fundamental matrices while fixing the remaining one. For example, considering the column action of $\mathfrak{h}_j$ on
    \[ F = \begin{bmatrix}
        f_{11} & f_{12} & f_{13} \\
        f_{21} & f_{22} & f_{23} \\
        f_{31} & f_{32} & f_{33}
    \end{bmatrix}, \]
    the nine eigenvectors are
    \[ f_{1k} - if_{2k}, \quad f_{3k}, \quad f_{1k} + if_{2k}, \quad k = 1,2,3. \]
    For the row action, we consider the transpose of $F$ and obtain the eigenvectors similarly to the column case. For the fundamental matrix fixed under the action, the eigenvectors are simply the variables.
    \item Obtain the weight space decomposition of $\mathrm{Sym}^p(\CC[F_{12}, F_{13}, F_{23}]_1)$ (we need $p=4$) as follows:
    \begin{equation}\label{eq:sym-power-weight-decomposition}
    \begin{split}
    & \mathrm{Sym}^p(\CC[F_{12}, F_{13}, F_{23}]_1) = \bigoplus_{\mu \in \mathfrak{h}^*} W_\mu \text{ with }\\
    & W_\mu= \bigoplus_{\substack{\lambda_1 + \cdots + \lambda_p = \mu \\ \lambda_i \in \mathfrak{h}^*}} V_{\lambda_1} \odot \dots \odot V_{\lambda_p} \text{ where } \\
    & V \odot W := \mathrm{span}_\CC(\{fg \;|\; f \in V, g \in W\}).    \end{split}
    \end{equation}
    See e.g.~\cite[p. 375]{fulton-harris} for weight space decompositions of tensor products.
    \item Compute the highest weight vectors representing the irreducible subrepresentations of $\mathrm{Sym}^4(\CC[F_{12}, F_{13}, F_{23}]_1)$ as follows. For each weight space $W_\mu$ in the decomposition above, we find the subspace
    \[
    H_\mu := \{ v \in W_\mu \mid X \cdot v = 0 \text{ for all } X \in \mathfrak{n}_+ \}.
    \]
The elements of $H_\mu$ are the highest weight vectors of irreducible subrepresentations contained in $\mathrm{Sym}^4(\CC[F_{12}, F_{13}, F_{23}]_1)$. When $\mu$ is the highest weight of the irreducible representation $W_j$ appearing in the isotypic component $V_j$ (defined earlier), we have
\[ H_\mu = H_j, \]
where $H_j$ is the highest weight space of $V_j$.
Once a highest weight vector is identified, its orbit under $\mathfrak{g}$ (or just $\mathfrak{n}_-$) generates the corresponding irreducible subrepresentation \cite[Proposition  14.13]{fulton-harris}. Grouping together subspaces with the same highest weight yields the isotypic components. According to the earlier remarks, it is enough to consider $H_\mu$ for constructing Vandermonde-type matrices.
\end{enumerate}

After we compute a basis of $\calI(Y_\calF)_4 \cap V_j$ for all $j \in [k] = [372]$, we see that there are $51$ non-trivial intersections $\calI(Y_\calF)_4 \cap V_j$, out of which only $3$ cannot be obtained from the variable multiples of the constraints in $\calI(Y_\calF)_3$. That is, if we consider the vector space $Q$ generated by $\{ fg \;|\; f \in \CC[F_{12},F_{23},F_{31}]_1, g \in \calI(Y_\calF)_3\}$, 48 out of 51 isotypic components $V_j$ belong to $Q$.
These 3 isotypic components correspond to the constraints that make the matrices of polynomials in \eqref{equation:quartic-sandwich} symmetric. 

Finally, we provide some comments on the numerical computations in Julia.
During the decomposition into irreducible representations, nullspace computations also arise, specifically in steps~(b) and~(d).
The largest nullspace problem occurs in step~(d) and has size $23 \times 23$.
For the SVD-based nullspace computation used in the decomposition algorithm, we set the absolute tolerance to $\varepsilon = 10^{-5}$.

To compute nullspaces of Vandermonde-type matrices, we use a stricter tolerance $\varepsilon < 10^{-10}$.
The resulting polynomials, initially obtained with floating-point coefficients, are then converted to exact-coefficient polynomials by first scaling to obtain integer coefficients and subsequently rounding to the nearest integer.

\subsection{Geometric interpretation of the new quartic constraints.}\label{subsec:geometric-meaning}
\begin{figure}
{
\psscalebox{1.0 1.0} % Change this value to rescale the drawing.
{
\begin{pspicture}(2.4,-3.7)(12.0,4.0)
\definecolor{colour0}{rgb}{0.49803922,0.49803922,0.49803922}
%\rput(0.2812791,-5.0882325){\psgrid[gridwidth=0.028222222, subgridwidth=0.014111111, gridlabels=6.0pt, subgridcolor=colour0](0,0)(0,0)(13,10)}
\pspolygon[linecolor=black, linewidth=0.04](2.331279,0.8817676)(2.331279,-2.1182325)(5.231279,-3.1182325)(5.231279,-0.11823242)
\pspolygon[linecolor=black, linewidth=0.04](11.931279,0.8817676)(11.931279,-2.1951554)(8.931279,-3.1182325)(8.931279,-0.14387345)
\pspolygon[linecolor=black, linewidth=0.04](5.3312793,0.8817676)(8.831279,0.8817676)(8.831279,3.8817675)(5.3312793,3.8817675)
\psdots[linecolor=black, dotsize=0.2](2.831279,-3.1182325)
\psdots[linecolor=black, dotsize=0.2](11.531279,-3.1182325)
\psdots[linecolor=black, dotsize=0.2](7.131279,3.1817675)
\psdots[linecolor=black, dotsize=0.2](5.231279,-3.1182325)
\psdots[linecolor=black, dotsize=0.2](8.931279,-3.1182325)
\psdots[linecolor=black, dotsize=0.2](3.831279,-1.7182324)
\psline[linecolor=black, linewidth=0.04](5.031279,0.08176758)(7.131279,3.1817675)
\psline[linecolor=black, linewidth=0.04](7.131279,3.1817675)(7.131279,3.1817675)(9.231279,0.08176758)
\psline[linecolor=black, linewidth=0.04](11.531279,-3.1182325)(2.831279,-3.1182325)
\psdots[linecolor=black, dotsize=0.2](10.431279,-1.6182324)
\psdots[linecolor=black, dotsize=0.2](6.3312793,1.9817675)
\psdots[linecolor=black, dotsize=0.2](7.931279,1.9817675)
\psline[linecolor=black, linewidth=0.04](5.3312793,1.9817675)(5.3312793,1.9817675)(8.831279,1.9817675)
\psline[linecolor=black, linewidth=0.04](5.231279,-3.1182325)(5.231279,-3.1182325)(2.331279,-0.11823242)
\psline[linecolor=black, linewidth=0.04](8.931279,-3.1182325)(8.931279,-3.1182325)(11.931279,-0.11823242)
\psline[linecolor=black, linewidth=0.04](3.831279,-1.7182324)(2.831279,-3.1182325)
\psline[linecolor=black, linewidth=0.04](11.531279,-3.1182325)(10.431279,-1.6182324)
\rput[bl](2.631279,-3.6182325){$c_1$}
\rput[bl](11.431279,-3.5182323){$c_2$}
\rput[bl](6.931279,3.4817677){$c_3$}
\rput[bl](8.831279,-3.5182323){$e_{21}$}
\rput[bl](7.931279,2.2817676){$e_{32}$}
\rput[bl](5.631279,2.2817676){$e_{31}$}
\rput[bl](4.931279,-3.5182323){$e_{12}$}
\rput[bl](4.171279,-1.7382324){$e_{13}$}
\rput[bl](10.731279,-1.7182324){$e_{23}$}
\rput[bl](2.831279,-0.4182324){$\ell_1$}
\rput[bl](11.031279,-0.5182324){$\ell_2$}
\rput[bl](6.931279,1.5817676){$\ell_3$}
\psline[linecolor=blue, linewidth=0.04](3.831279,-2.5182323)(3.831279,-0.8182324)
\psline[linecolor=blue, linewidth=0.04](3.131279,-1.5182325)(4.531279,-1.9182324)
\psline[linecolor=blue, linewidth=0.04](3.4312792,-0.91823244)(4.231279,-2.5182323)
\psline[linecolor=blue, linewidth=0.04](3.331279,-2.1182325)(4.3312793,-1.3182324)
\rput[bl](3.531279,-0.8182324){$\im (F_{13})$}
\rput[bl](6.661582,-1.8576263){$F_{12}$}
\rput[bl](4.0379457,0.78843427){$F_{13}$}
\psline[linecolor=black, linewidth=0.04](8.759494,-1.8346719)(8.821885,-1.8346719)(5.39037,-2.3243687)
\psline[linecolor=black, linewidth=0.04](9.031279,-1.79399)(10.231279,-1.6182324)
\psline[linecolor=black, linewidth=0.04, arrowsize=0.05291667cm 2.0,arrowlength=1.4,arrowinset=0.0]{->}(5.1249156,-2.3637626)(4.752491,-2.416187)
\psline[linecolor=blue, linewidth=0.04](5.091279,1.2917676)(4.391279,0.2717676)
\psline[linecolor=blue, linewidth=0.04, arrowsize=0.05291667cm 2.0,arrowlength=1.4,arrowinset=0.0]{->}(4.251279,0.071767576)(3.901279,-0.42823243)
\psline[linecolor=blue, linewidth=0.04](3.251279,-1.0782324)(4.491279,-2.3582325)
\end{pspicture}
}}
\caption{
Geometry of the quartics~\eqref{eq:quartics}: each epipolar line $l_i=F_{ij}e_{jk}$ belongs to the pencil of epipolar lines~$\im (F_{ik})$.}\label{fig:quartics-geometry}
\end{figure}
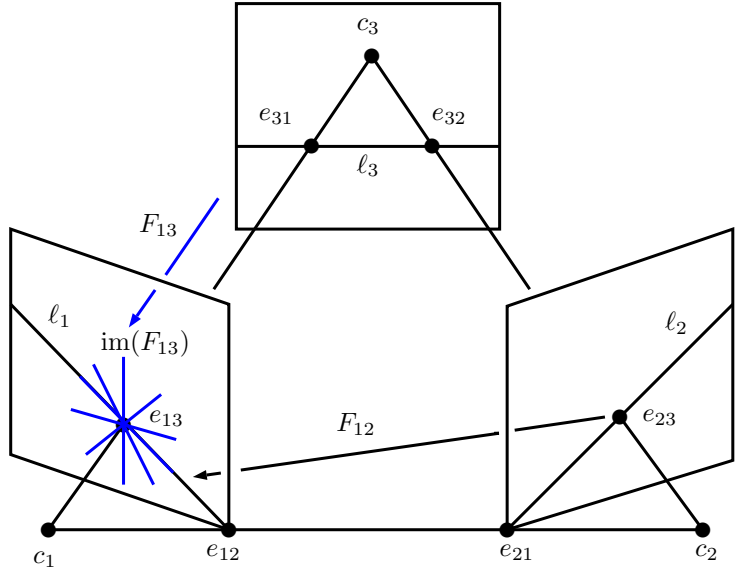

Since the fundamental matrices $F_{kj}$ have rank $2$, their adjugates $\adj(F_{kj})$ have rank one and so may be written as $\adj(F_{kj}) =  e_{jk} u^T$ for some $u\in \CC^{3\times 1}$. The matrices $F_{ij}\adj(F_{kj}) F_{ki}$ appearing in the quartics~\eqref{eq:quartics} are rank-one symmetric matrices, and thus factorize as  $F_{ij}\adj(F_{kj}) F_{ki} = \alpha \, a\,a^T$ with $a\in \CC^{3\times 1}$ and $\alpha \in \CC^* $. Thus,
\[
\alpha\, a\,  a^T = F_{ij} e_{jk} u^T F_{ki}
\quad 
\Rightarrow \quad 
\alpha\, F_{ij} e_{jk} = F_{ki}^Tu = F_{ik} u,
\]
implying that $ F_{ij} e_{jk} \in \im (F_{ik})$.
Considering all possible assignments to $i,j,k \in [3]$ without repetitions and the symmetry by~\cref{eq:quartics}, we obtain six relations  
\begin{align*}
&F_{12} e_{23} \in \im ( F_{13} ),\  F_{13}e_{32} \in \im (F_{12}),\ F_{23} e_{31} \in \im( F_{21}),\\
&F_{21} e_{13} \in \im (F_{23} ),\ F_{31} e_{12} \in \im (F_{32}),\ F_{32} e_{21} \in \im ( F_{31} ),
\end{align*}
from which the quartics~\eqref{eq:quartics} follow.

As is well known in multiview geometry, $\im (F_{ij})$ represents the \emph{pencil of epipolar lines} through the basepoint $e_{ij}$ in image $i$.
The lines in this pencil correspond to points in image $j$.
In particular, the epipolar line $F_{ij}e_{jk}$ in image $i$ corresponds to the epipole $e_{jk}\sim P_j \hat{c}_k$.
The relations above may be understood geometrically as requiring that the epipolar line $\ell_i=F_{ij}e_{jk}$ belongs to the pencil of epipolar lines $\operatorname{im} ( F_{ik})$; with this interpretation, we see that $\ell_i$ is generally the line spanned by $e_{ij}$ and $e_{ik}.$
 \Cref{fig:quartics-geometry} illustrates the specific relation $F_{12} e_{23} \in \im (F_{13} )$.

\subsection{Comparing quartics with previously-known constraints}\label{subsec:comparison}

Let us investigate to what extent the four types of constraints in~\Cref{thm:uncalibrated-ideal} are really needed.

\begin{example}\label{ex:felix-martin}
In~\cite{DBLP:conf/iccv/BratelundR23}, the following example of a triple is given,
\begin{equation}\label{eq:felix-martin}
F_{12} \sim
\begin{bmatrix}
    \phantom{-}0&1&0\\
    -1&0&0\\
    \phantom{-}0&0&0
\end{bmatrix}
, \quad 
F_{13}  \sim \begin{bmatrix}
    \phantom{-}0&0&1\\
    \phantom{-}0&0&0\\
    -1&0&0
\end{bmatrix}
, \quad 
F_{23}  \sim \begin{bmatrix}
    0&0&0\\
    0&0&0\\
    0&0&1
\end{bmatrix},
\end{equation}
for which the cubics~\eqref{eq:Fij-det} and quintics~\eqref{eq:quintics} are satisfied, yet $(F_{12}.F_{13},F_{23})\notin Y_\calF.$
This can be confirmed using~\Cref{thm:uncalibrated-ideal}, since direct evaluation of the quartics~\eqref{eq:quartics} and septics~\eqref{eq:septics} shows they do not all vanish on this triple.
\end{example}

Arguably, the triple~\eqref{eq:felix-martin} is of limited interest, since $\rank (F_{23}) = 1.$
Thus, we next provide two examples of fundamental matrix triples, with $\rank(F_{ij}) =2$ for all $i,j,$ which demonstrate the necessity of the constraints in~\Cref{thm:uncalibrated-ideal}.

\begin{example}\label{ex:septics-needed}
For the fundamental matrix triple
\begin{equation}\label{eq:septics-needed}
F_{12}  \sim
\begin{bmatrix}
   \phantom{-} 0&1&0\\
\phantom{-}4&0&3\\
    -2&0&0
\end{bmatrix}
, \quad 
F_{13}  \sim \begin{bmatrix}
    \phantom{-}0&\phantom{-}0&1\\
    -2&-3&0\\
    \phantom{-}3&-3&0
\end{bmatrix}
, \quad 
F_{23}  \sim \begin{bmatrix}
    -1&2&-1\\
    \phantom{-}0&0&\phantom{-}0\\
    \phantom{-}1&0&-1
\end{bmatrix},
\end{equation}
all cubics, quartics, and quintics in~\Cref{thm:uncalibrated-ideal} vanish, but exactly one of the 108 septics does \emph{not} vanish.
Here, neither~\eqref{eq:noncollinearity-conditions} nor~\eqref{eq:collinearity-conditions}, are satisfied, since
\begin{equation}\label{eq:bad-epipoles-1}
e_{12}  \sim \begin{bmatrix}
1\\
0\\
0
\end{bmatrix} \sim e_{13} ,
\quad 
e_{21} \sim \begin{bmatrix}
    0\\
    1 \\
    0
\end{bmatrix} \sim e_{23},
\quad 
e_{31} \sim \begin{bmatrix}
    0\\
    0\\
    1
\end{bmatrix}
\not\sim 
\begin{bmatrix}
    1\\
    1\\
    1
\end{bmatrix}
\sim 
e_{32}.
\end{equation}
\end{example}

\begin{example}\label{ex:5-epipoles}
With a similar flavor as~\Cref{ex:septics-needed}, the triple
\begin{equation}\label{eq:triple-5-epipoles}
F_{12}  \sim
\begin{bmatrix}
   \phantom{-}0&0&\phantom{-}0\\
    \phantom{-}0&0&-3\\
-3&0&\phantom{-}4
\end{bmatrix}
, \quad 
F_{13}  \sim \begin{bmatrix}
    \phantom{-}0&\phantom{-}0&0\\
    \phantom{-}2&-3&0\\
    -4&\phantom{-}4&0
\end{bmatrix}
, \quad 
F_{23}  \sim
\begin{bmatrix}
    \phantom{-}1&-1&\phantom{-}3\\
    \phantom{-}1&-1&\phantom{-}1\\
    -1&\phantom{-}1&-3
\end{bmatrix}
\end{equation}
has five distinct epipoles,
\begin{equation}\label{eq:bad-epipoles-2}
e_{12} \sim \begin{bmatrix}
1\\
0\\
0
\end{bmatrix} \sim e_{13},
\quad 
e_{21} \sim \begin{bmatrix}
    0\\
    1 \\
    0
\end{bmatrix} \not\sim  \begin{bmatrix}
    1\\
    0 \\
    1
\end{bmatrix}\sim e_{23} ,
\quad 
e_{31} \sim \begin{bmatrix}
    0\\
    0\\
    1
\end{bmatrix}
\not\sim 
\begin{bmatrix}
    1\\
    1\\
    0
\end{bmatrix}
\sim 
e_{32},
\end{equation}
and satisfies all cubics and quintics, but fails to satisfy two quartics and four septics.
\end{example}

The septics~\eqref{eq:septics}, it turns out, can be obtained by saturating the ideal generated by cubics, quartics, and quintics by an ideal of polynomials encoding the collinearity conditions~\eqref{eq:collinearity-conditions}.
In fact, this is how we initially discovered these equations, before realizing their interpretation as $7\times 7 $ minors of the matrix $F.$

\begin{example}\label{ex:quartics-needed}
Similarly to~\Cref{ex:septics-needed}, we can construct triples $(F_{12}, F_{13}, F_{23})\notin Y_{\calF}$ which satisfy all cubics, quintics, and septics, but not all quartics.
Indeed, if $F_{12}$ and $F_{13}$ are arbitrary rank-1 matrices and $F_{23}$ is an arbitrary matrix of rank at most $2,$ then a Gr\"{o}bner basis calculation shows that all cubics, quintics, and septics vanish at $(F_{12}, F_{13}, F_{23}).$
The quartics, however, generally do not vanish on such a triple: concretely, we may take
\begin{equation}\label{eq:quartics-needed}
F_{12}  \sim \begin{bmatrix}
1&0&0\\
0&0&0\\
0&0&0
\end{bmatrix},
\quad 
F_{13}  \sim \begin{bmatrix}
0&0&0\\
0&1&0\\
0&0&0
\end{bmatrix},
\quad 
F_{23}  \sim \begin{bmatrix}
4&1&-1\\
3&0&-1\\
1&1&\phantom{-}0
\end{bmatrix},
\end{equation}
showing that the quartics are needed to cut out $Y_\calE$ set-theoretically.
\end{example}

In view of~\Cref{ex:septics-needed}, it is natural to ask whether or not rank-2 triples satisfying cubics, quintics, septics but not quartics exist.
This turns out not to be the case: letting $I_1 (F_{ij})$ denote the ideal of $2\times 2$ minors of $F_{ij},$ we find computationally that
\begin{equation}\label{eq:no-full-rank-quartics}
\Big\langle 
\text{cubics, quintics, septics}
\Big\rangle  : \Big(
\big\langle 
\text{quartics}
\big\rangle
\cdot 
I_1(F_{12} ) \cdot 
I_1(F_{13} ) \cdot 
I_1(F_{23} )
\Big)^{\infty}
= \langle 1 \rangle .
% \big\langle 
% \text{eqs~\eqref{eq:quartics}
% }
% \big\rangle \subset \Big(
% \big\langle 
% \text{eqs~\eqref{eq:Fij-det},~\eqref{eq:quintics},~\eqref{eq:septics}
% }
% \big\rangle 
% :
% \big\langle 
% \text{eqs~\eqref{eq:quartics}
% }
% \big\rangle 
% \Big) : \Big(
% I_1(F_{12} ) \cdot 
% I_1(F_{13} ) \cdot 
% I_1(F_{23} )
% \Big)^{\infty}.
\end{equation}
In fact, the same holds if we exchange quartics with quintics above:
\begin{equation}\label{eq:no-full-rank-quintics}
\Big\langle 
\text{cubics, quartics,  septics}
\Big\rangle  : \Big(
\big\langle 
\text{quintics}
\big\rangle
\cdot 
I_1(F_{12} ) \cdot 
I_1(F_{13} ) \cdot 
I_1(F_{23} )
\Big)^{\infty}
= \langle 1 \rangle .
% \big\langle 
% \text{eqs~\eqref{eq:quintics}
% }
% \big\rangle \subset \Big(
% \big\langle 
% \text{eqs~\eqref{eq:Fij-det},~\eqref{eq:quartics},~\eqref{eq:septics}
% }
% \big\rangle 
% :
% \big\langle 
% \text{eqs~\eqref{eq:quintics}
% }
% \big\rangle 
% \Big) : \Big(
% I_1(F_{12} ) \cdot 
% I_1(F_{13} ) \cdot 
% I_1(F_{23} )
% \Big)^{\infty}.
\end{equation}
Here is an analogue of~\Cref{ex:quartics-needed} showing that quintics, just like septics and quartics, are needed to cut out $Y_\calF$ set-theoretically.
\begin{example}\label{ex:quintics-needed}
For the triple 
\begin{equation}\label{eq:quintics-needed}
F_{12}  \sim \begin{bmatrix}
1&0&0\\
0&0&0\\
0&0&0
\end{bmatrix},
\quad 
F_{13}  \sim \begin{bmatrix}
\phantom{-}0&-1&-1\\
-2&\phantom{-}1&\phantom{-}1\\
-1&\phantom{-}0&\phantom{-}0
\end{bmatrix},
\quad 
F_{23}  \sim \begin{bmatrix}
0 & -1 & -1 \\
1&\phantom{-}1&\phantom{-}1\\
2&-2&-2
\end{bmatrix},
\end{equation}
all equations in~\Cref{thm:uncalibrated-ideal} except four quintics vanish.
Note $\rank (F_{12})=1,$ and 
$\rank \left( \left[  F_{31} \, \vert \, F_{32}\right]\right) = 2$.
All such examples follow a similar pattern.
\end{example}

\begin{remark}\label{remark:quartics-from-Martyshev-cubics}
In view of \Cref{thm:calibrated-local}, the quartics \eqref{eq:quartics} must follow from the equations listed in \Cref{remark:result-of-Martyushev}. Indeed, we multiply the transpose of \eqref{eq:necF2} by $\hat F_{jk}$ on the right to get 
\[
 (\hat{F}_{ij}\hat{F}_{jk})^T \hat{F}_{ij}\hat{F}_{jk} - \frac{1}{2}\trace(\hat{F}_{ij}^T \hat{F}_{ij})\, \hat{F}_{jk}^T \hat{F}_{jk} + \hat{F}_{ki}\adj(\hat{F}_{ij})^T \hat{F}_{jk} = 0
\]
and conclude that the last term, $\hat{F}_{ki}\adj(\hat{F}_{ij})^T \hat{F}_{jk}$, must be a symmetric matrix just as in \Cref{remark:quartics-from-symmetric-matrices}.
\end{remark}

\section{Proofs of Main Results}\label{sec:proofs}

To prove~\Cref{thm:uncalibrated-ideal,thm:calibrated-local}, we will need to know the dimensions of $Y_{\calF }$ and $Y_{\calE}.$

\begin{proposition}\label{prop:dimensions}
We have the dimension formulae
\begin{equation}\label{eq:compat-dimensions-E}
\dim (Y_\calE) = 11,
\quad 
\dim (Y_\calF) = 18.
\end{equation}
\end{proposition}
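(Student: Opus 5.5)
The plan is to compute each dimension as (dimension of the parameter space) minus (dimension of a generic fiber of $\Psi_\calK$). The generic fiber is identified via the group of ambiguities of multiview reconstruction.

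For $Y_\calF$, first note that the parametrization factors through the space of camera triples. The domain $\calF\times\SO_3^{\times 3}\times(\CC^3)^{\times 3}$ has dimension $3\cdot 5+3\cdot 3+3\cdot 3=33$, and it maps dominantly (up to scale) onto $\PP(\CC^{3\times 4})^{\times 3}$, which has dimension $33$, via the RQ decomposition \eqref{eq:matrix-decomp}. So it suffices to show that a generic fiber of the induced map $(P_1,P_2,P_3)\mapsto(F_{12},F_{13},F_{23})$ is a single $\PGL_4(\CC)$-orbit, of dimension $15$; this gives $33-15=18$.

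The fiber certainly contains the $\PGL_4$-orbit, and a generic camera triple has trivial stabilizer, so the orbit has dimension $15$. For the reverse inclusion, we use the projective reconstruction theorem recalled earlier: $F_{12}$ determines $(P_1,P_2)$ up to $\PGL_4$. After normalizing $(P_1,P_2)$, the cameras $P_3$ compatible with $F_{13}$ and $F_{23}$ are forced, generically up to finitely many possibilities, by \Cref{prop:hz-compatibility} and \cite[Thm~15.6]{DBLP:books/cu/HZ2004}. Indeed, in the noncollinear case the three fundamental matrices determine the camera triple up to projectivity. Hence the generic fiber dimension is $15$.

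Alternatively, and more robustly for a computer-assisted paper, we can bound the fiber dimension by a rank computation. We compute the rank of the Jacobian of $\Psi_\calF$ (composed with affine charts on each $\PP^8$) at a random rational or finite-field point, and check that it equals $18$. Generic rank of the differential equals the image dimension by generic smoothness in characteristic zero, and a random point gives a lower bound on the generic rank. An upper bound of $18$ comes from the $\PGL_4$-orbit argument above, since the fiber contains at least a $15$-dimensional orbit.

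For $Y_\calE$, the domain $\SO_3^{\times 3}\times(\CC^3)^{\times 3}$ has dimension $18$. The fibers contain orbits of the similarity group $\simgrp$, which has dimension $7$ (rotations $3$, translations $3$, scale $1$). Here the scale acts trivially on the projectivized $F_{ij}$ but nontrivially on the centers, so the $7$-dimensional action is effective on the parameters while preserving the image. This gives $\dim Y_\calE\le 18-7=11$.

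For equality, we again either invoke the calibrated reconstruction theorem or do a Jacobian rank check. The theorem says that a generic essential matrix determines the camera pair up to $\simgrp$ and the twisted pair, so the fiber for the triple is a finite union of $\simgrp$-orbits. The check would confirm that the Jacobian has rank $11$ at a random point.

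The main obstacle is rigorously controlling the generic fiber, that is, excluding extra positive-dimensional families beyond the group orbits. The cleanest route is the Jacobian rank computation at a random exact point in Macaulay2: it gives $\dim\ge$ the observed rank, and combined with the orbit upper bounds it yields equality. I would rely on this rather than on the delicate genericity hypotheses of the reconstruction theorems.
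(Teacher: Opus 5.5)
Your first argument is the paper's proof: $\dim Y_\calF=33-15$ and $\dim Y_\calE=18-7$, with the generic fibers identified through the reconstruction theorems as a single $\PGL_4(\CC)$-orbit, resp.\ finitely many $\simgrp$-orbits. The variant you say you would rely on divides the work differently. The upper bounds $\dim Y_\calF\le 18$ and $\dim Y_\calE\le 11$ come for free from the group actions, and the lower bounds come from one exact Jacobian-rank evaluation, so no uniqueness theorem is needed. The paper's route instead rests on a cited theorem, but it tells you what the generic fiber actually is.

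Two small corrections to the variant. First, for the calibrated upper bound you need the generic stabilizer of $\simgrp$ to be trivial (or finite), not just the action to be effective. This holds: $R_iS^T=R_i$ forces $S=I$, and then $sc_i+t=c_i$ for two distinct centers forces $s=1$, $t=0$. Second, for the rank check, compose with a rational parametrization of $\SO_3(\CC)$ (e.g.\ the Cayley transform) and affine charts, so the Jacobian entries are rational functions with integer coefficients. Then a nonzero $18\times 18$ (resp.\ $11\times 11$) minor at one rational or $\mathbb{F}_p$-point rigorously certifies the generic rank.

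If you keep the theoretical route, both uniqueness steps you gloss over are short.

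In the uncalibrated case, normalize $(P_1,P_2)$ and apply the two-view theorem to $F_{13}$ and $F_{23}$. Any $P_3'$ in the fiber is then, up to scale, both $P_3(\lambda I+\hat c_1w^T)$ and $P_3(\mu I+\hat c_2z^T)$, since these are the stabilizers of $P_1$ and $P_2$. After rescaling $w,z$ this reads $\lambda P_3+e_{31}w^T\sim\mu P_3+e_{32}z^T$. Since $P_3$ has rank $3$ and $e_{31}\not\sim e_{32}$, this forces $w=z=0$.

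In the calibrated case, $F_{12}$ and $F_{13}$ determine the triple modulo $\simgrp$ and twists only up to one free parameter, the ratio of the baselines $c_2-c_1$ and $c_3-c_1$. It is $F_{23}$, which fixes the direction of $c_3-c_2$, together with noncollinearity that removes this parameter.
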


\begin{proof}
As the projective reconstruction theorem states that the generic fibers of the parametrization $\left( \PP (\CC^{3\times 4}) \right)^{\times 3} \dashrightarrow Y_\mathcal{F}$ are single $\PGL_4 (\CC )$-orbits, we have
\[
\dim (Y_\calF ) = \dim \left( \PP (\CC^{3\times 4 } )^{\times 3}  \right) - \dim (\PGL_4(\CC)) = 3\times 11 - 15 = 18.
\]
Similarly, the generic fiber over a point in $Y_\mathcal{E}$ is the union of two orbits under the similarity subgroup of $\PGL_4 (\CC )$, so that
\[
\dim (Y_\calE ) =  3\times 6 - 7 = 11.
\]
\end{proof}

\begin{proof}[Proof of~\Cref{thm:uncalibrated-ideal}]
Let $I$ be the ideal generated by cubics, quartics, quintics,
and septics.
Noting one inclusion $I\subset \mathcal{I}(Y_\calF)$, \Cref{prop:dimensions} implies that the reverse inclusion will follow if we show that $I$ is a prime ideal of Krull dimension $18+3=21.$
Macaulay2 gives us $\dim (I)=21$ with ease, but showing $I$ is prime takes more work.
Consider a primary decomposition,
\begin{equation}\label{eq:p-decomp}
I = Q_0 \cap Q_1 \cap \ldots Q_{s} \cap Q_{s+1} \cap \cdots \cap Q_{s+t},
\end{equation}
where $\dim (Q_0)=\ldots = \dim (Q_s) =21$, $\sqrt{Q_0}=\mathcal{I}(Y_\calF)$, and $\dim (Q_i)<21$ for all $i>s.$
Let $m$ denote the multiplicity of $Q_0$ along $\mathcal{I}(Y_\calF).$
For any $\boldsymbol{\alpha}\in  \ZZ_{\ge 0}^{\times 3},$ we have 
\begin{align*}
\mdeg_I (\boldsymbol{\alpha}) &=
\mdeg_{Q_0} (\boldsymbol{\alpha}) + \mdeg_{Q_1}(\boldsymbol{\alpha}) + \ldots + \mdeg_{Q_s}(\boldsymbol{\alpha}) \\
&= m \cdot \mdeg_{\calI (Y_\calF)} (\boldsymbol{\alpha}) + \mdeg_{Q_1}(\boldsymbol{\alpha}) + \ldots + \mdeg_{Q_s}(\boldsymbol{\alpha})\\
&\ge \mdeg_{\calI (Y_\calF)} (\boldsymbol{\alpha}).
\end{align*}
For this last quantity to be nonzero, $\boldsymbol{\alpha}$ must be one of the lattice points described in the theorem, i.e.~$\boldsymbol{\alpha}$ must be $(7,7,4), (7,6,5), (6,6,6),$ or some permutation thereof.
Macaulay2 readily verifies that the nonzero numbers $\mdeg_I (\boldsymbol{\alpha})$ are exactly those shown in~\Cref{fig:multidegrees} (left.)
We can also prove that these numbers are \emph{lower bounds} on the corresponding numbers $\deg_{\calI (Y_\calF)} (\boldsymbol{\alpha})$ via symbolic computations involving zero-dimensional ideals.\footnote{We refer to the supplementary code linked above for details.}
Thus, we obtain the multidegree of $Y_\calF$,  and moreover $m=1,$  $Q_1=Q_2 \ldots = Q_s=\emptyset$, and $Q_0 = \mathcal{I} (Y_\calF ).$

Finally, we must argue that the lower-dimensional components are all trivial, 
\[
Q_{s+1} = \ldots = Q_{s+t} = \emptyset .
\] 
This can be established using the homological criteria of~\cite{EHV}: if\begin{equation}\label{eq:ext-number}
6=27-21 = \codim (I) = 
\displaystyle\min \left\{ d \mid 
\codim \Ext^d (R/I, R) = d 
\right\}, 
\end{equation}
then $R/I$ is Cohen-Macaulay and hence equidimensional.
The Macaulay2 command \texttt{removeLowestDimension} quickly establishes~\eqref{eq:ext-number}, thus proving $I=Q_0=\calI (Y_\calF)$.
\end{proof}

\begin{proof}[Proof of~\Cref{thm:calibrated-local}]
By~\Cref{prop:dimensions},we may evaluate 
the Jacobian $J(F)$ of equations~\eqref{eq:demazure},~\eqref{eq:quartics}, and~\eqref{eq:martyushev-sextic} at $F_0\in \widehat{Y}_\calE$, and verify $\rank \left(J(F_0) \right)= 11 +3 = 14.$
\end{proof}

Using numerical monodromy heuristics (as described in~\cite{duff-monodromy}), we have computed the multidegree of $Y_{\calE}$ shown in \Cref{fig:multidegrees} (right).
In principal, the output of these calculations may be certified to prove lower bounds on the multidegrees.
To obtain the corresponding upper bounds, a better understanding of the vanishing ideal $\calI (Y_{\calE})$ would needed.
However, we can at least prove that the numerical results are correct for the smallest multidegree.

\begin{proposition}\label{prop:essential-multidegree}
$\mdeg_{Y_\calE}(5,5,1)= \mdeg_{Y_\calE}(5,1,5)= \mdeg_{Y_\calE}(1,5,5)=400.$
\end{proposition}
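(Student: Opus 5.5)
By the definition of the multidegree, $\mdeg_{Y_\calE}(5,5,1)$ is the number of points of $Y_\calE$ lying in $L_1\times L_2\times H$. Here $L_1,L_2\subset \PP^8$ are general linear subspaces of codimension $5$ (so each is a $\PP^3$), and $H\subset\PP^8$ is a general hyperplane. The plan is to count these points by projecting to the first two factors. The other two equalities then follow from the symmetry of $Y_\calE$ under permuting the three cameras: the $\mathfrak S_3$-action, composed with the transposes $F_{ji}\sim F_{ij}^T$ from~\eqref{eq:Fij-transpose}, preserves $Y_\calE$ and permutes the factors.

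\emph{Step 1 (the image in the first two factors).} Let $\pi_{12}\colon Y_\calE\to \PP^8\times\PP^8$ be the projection to $(F_{12},F_{13})$. Its image is $\mathcal E\times\mathcal E$, where $\mathcal E$ is the essential variety. To see this, fix camera $1$ as $(I,0)$; then any pair of essential matrices is realized by choosing $(R_2,c_2)$ and $(R_3,c_3)$ independently. Since $\mathcal E$ is a $5$-dimensional irreducible variety of degree $10$, a general $\PP^3$ meets it transversally in $10$ points, all of which are general points of $\mathcal E$. Hence $(L_1\times L_2)\cap(\mathcal E\times\mathcal E)$ consists of $100$ general pairs $(F_{12},F_{13})$.

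\emph{Step 2 (the fibre over a general pair).} Take a general pair $(F_{12},F_{13})$. Each $F_{1j}$ is produced by exactly two relative poses up to scale, namely the twisted pair. This gives $4$ choices of $\bigl((R_2,c_2),(R_3,c_3)\bigr)$ relative to camera $1$. For each choice, the scales of $c_2$ and $c_3$ can be varied independently, so the resulting matrices are
\[
F_{23}(s:t)=R_2\,[\,t c_3-s c_2\,]_\times R_3^T ,
\]
which is linear in $(s:t)$ and sweeps out a line $\ell\subset\PP^8$. I claim that $\pi_{12}^{-1}(F_{12},F_{13})$ is precisely the union of these $4$ lines. This is consistent with dimensions: $\dim Y_\calE-\dim(\mathcal E\times\mathcal E)=11-10=1$. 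The fibre over a general point of the image of $\Psi_\calE$ consists of these lines minus finitely many points, and a general fibre of the closure is the closure of that set, so no extra components appear over a general base point.

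\emph{Step 3 (the count).} A general hyperplane $H$ meets each of the $4$ lines in exactly one point. This gives $100\cdot 4=400$ points, provided the count is transverse and all these points are distinct. Genericity of $L_1,L_2,H$, via Kleiman--Bertini transversality on the irreducible variety $Y_\calE$, guarantees that each intersection point is reduced and lies over a general base point.

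The main obstacle is showing that the $4$ lines over a general pair are pairwise distinct, so that no two of them coincide and the intersection points with $H$ are distinct. I would check this by one explicit computation at a random rational configuration: compute the $4$ lines (equivalently, the $4$ pencils $A+\lambda B$) and verify that they span distinct lines. By semicontinuity, distinctness at one point then holds generically. As a cross-check, I would confirm that the number obtained agrees with the monodromy value $400$ shown in~\Cref{fig:multidegrees}.
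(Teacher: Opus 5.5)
Your proposal is correct and follows the paper's proof. In both, one projects $Y_\calE$ onto the first two factors, where general codimension-$5$ linear sections of the degree-$10$ essential variety give $10\cdot 10=100$ pairs $(F_{12},F_{13})$; over each pair, $F_{23}$ sweeps out $4$ lines coming from the twisted-pair ambiguity of each $F_{1j}$ and the independent rescaling of the two baselines; cutting with a general hyperplane then gives $400$, and the other two values follow by symmetry. Your extra care (that the general fibre is exactly the union of these four lines, and that the lines are pairwise distinct, which you propose to certify at one configuration) fills in details the paper leaves implicit rather than changing the argument.
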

\begin{proof}
First, by $\simgrp $-invariance, note that $Y_\calE$ is also the image of the map
\begin{align*}
\widetilde{\Psi_\calE} : 
\SO_3 (\CC)^{\times 2} \times \left(\CC^{3}\right)^{\times 2} &\dashrightarrow 
\PP (\CC^{3\times 3})^{\times 3}
\\
(R_{12}, R_{13}, c_{12}, c_{13}) &\mapsto 
\left(
[c_{12}]_{\times } R_{12}^T,
[c_{13}]_{\times } R_{13}^T
,
R_{12} [c_{13}-c_{12}]_{\times } R_{13}^T
\right). \nonumber 
\end{align*}
The corresponding map for a single essential matrix,
\begin{align*}
\Phi_{\calE } : \SO_3 (\CC)\times \CC^{3} &\dashrightarrow 
\PP (\CC^{3\times 3})
\\
(R, c) &\mapsto 
[c]_{\times } R^T
, \nonumber 
\end{align*}
is well-understood: its closed image is a projective variety of dimension $5$ and degree $10,$ and a general fiber of $\Phi_{\calE}$ is the union of two lines,
\[
\Phi_{\calE}^{-1} \left( 
\Phi_{\calE} (R,c)
\right) = 
\{  (R, uc) \mid u \in  \CC^\ast \}
\displaystyle\cup 
\{ (\tw(R, c), uc) \mid u \in  \CC^\ast \}
,
\]
where $\tw$ denotes the ``twisted pair map" (cf.~\cite[eq.~(2)]{galois-siaga}),
\begin{align}\label{eq:twisted-pair}
\tw : \SO_3 (\CC)\times \CC^{3} &\dashrightarrow   \SO_3 (\CC)\times \CC^{3} \\
(R,c) &\mapsto \left( R \left( 
\displaystyle\frac{2}{c^Tc} \, c c^T - I 
\right), -c\right).
\end{align}
Thus, intersecting $Y_{\calE}$ with $5$ general linear spaces in each of the first two projective factors gives $10 \times 10 = 100$ values for the pair $(F_{12}, F_{13})$, and $F_{23}$ must lie on one of $400$ lines $\ell$, which for fixed $(R_{1i},c_{1i}) \in \Phi_\calE^{-1} (F_{1i})$, $i=2,3,$ have the form
\[
\ell = \{ 
F_{23} \in \PP (\CC^{3\times 3}) \mid F_{23} \sim 
R_{12} [ac_{13}-bc_{12}]_{\times } R_{13}^T,
\quad 
[a:b] \in \PP^1 \}.
\]
Intersecting such an $\ell $ with a general hyperplane, we deduce $\mdeg_{Y_{\calE}} (5,5,1) = 400,$ and the other values follow by symmetry.
\end{proof}

\section{Open Problems}\label{sec:open-problems}

We conclude by stating some open problems.

\begin{problem}
Determine polynomial equations that set-theoretically cut out the viewing graph variety $\overline{\Psi_G}$ of~\eqref{eq:parametrize-viewing-graph}, for any viewing graph $G.$
\end{problem}

Beyond the case of $G=K_3$, for which~\Cref{thm:uncalibrated-ideal} provides a stronger ideal-theoretic answer, independent $K_4$ polynomials have been studied in~\cite{DBLP:conf/iccv/BratelundR23,CONNELLY2025102446}.
One may hope that combining these with the polynomials~\Cref{thm:uncalibrated-ideal} solves the problem.

Moving on from the uncalibrated the calibrated case,~\Cref{thm:calibrated-local} represents our current state of knowledge when $G=K_3.$
One may hope to improve this result.

\begin{problem}
    Find set-theoretic equations for the variety $Y_\calE .$
\end{problem}

For the equi-uncalibrated case, we lack even a weak result like~\Cref{thm:calibrated-local}.

\begin{problem}
Determine equations that locally cut out the variety $Y_{\Kdel}.$ 
\end{problem}

Finally, we point out that compatibility varieties are just one of the known approaches to studying the algebraic dependencies between three or more cameras.
An alternative, well-studied approach is via multiple-view tensors~\cite[Ch.~ 17]{DBLP:journals/ijcv/HartleyK07}, also known as \emph{Grassmann tensors}~\cite[\S 4.3.6]{kileel2022snapshot}.
In the Grassmann tensor formalism, the fundamental matrix is known as the bifocal tensor.
Two other species of Grassmann tensor are relevant for the pinhole camera: the \emph{trifocal} and \emph{quadrifocal tensor}.

For uncalibrated cameras, the vanishing ideals of trifocal tensors have been completely determined by the work of Aholt and Oeding~\cite{aholt-oeding}; see also~\cite{DBLP:conf/nips/MiaoL024} for more recent developments connected to tensor decomposition.
Much less is known for the \emph{calibrated trifocal variety}\cite{kileel-siaga}. 
Although the work of Martyushev provides a characterization over the real numbers~\cite{DBLP:journals/jmiv/Martyushev17}, and some low-degree equations, a set-theoretic description of this variety still seems to be unknown.

For quadrifocal tensors, even less is known---see~\cite{oeding-quadrifocal} for a taxonomy of low-degree equations in the uncalibrated case.

\begin{problem}
    Study the relationship between compatibility varieties for the viewing graphs $K_3$ and $K_4$ and the corresponding Grassmann tensor varieties.
    Can we transfer knowledge of  equations from one class of varieties to the other?
\end{problem}

\section*{Acknowledgments}
We are thankful to the Fields Institute for hosting us at the Workshop on the Applications of Commutative Algebra, which helped to initiate this project. The research of AL is partially supported by NSF DMS award 2001267 and the Simons Fellows program. TP was supported by OPJAK CZ.02.01.01/00/22 008/0004590 Roboprox Project. VK was supported by CEDMO 2.0 NPO Project.
\providecommand{\bysame}{\leavevmode\hbox to3em{\hrulefill}\thinspace}
\providecommand{\MR}{\relax\ifhmode\unskip\space\fi MR }
% \MRhref is called by the amsart/book/proc definition of \MR.
\providecommand{\MRhref}[2]{%
  \href{http://www.ams.org/mathscinet-getitem?mr=#1}{#2}
}
\providecommand{\href}[2]{#2}

\end{document}